\documentclass{lipics-v2021}
\nolinenumbers
\usepackage[OT2,OT1]{fontenc}

\newcommand{\cyr}{%
  \renewcommand{\rmdefault}{wncyr}%
  \renewcommand{\sfdefault}{wncyss}%
  \renewcommand{\encodingdefault}{OT2}%
  \normalfont\selectfont
}
\DeclareTextFontCommand{\textcyr}{\cyr}

\usepackage{amsmath,amssymb,amscd,amsfonts,amsthm}
\usepackage{latexsym}
\usepackage{esvect}

\usepackage{bussproofs}
\usepackage{natded}
\usepackage{proof}
\usepackage[tableaux]{prooftrees}
\usepackage{forest}

\newcommand{\fillBox}{\hfill$\blacktriangleleft$}

\usepackage{graphicx}
\graphicspath{{images/}}
\usepackage{float}
\usepackage{tikz}
\usetikzlibrary{matrix,positioning}

\usepackage{multicol}
\usepackage{ragged2e}
\usepackage{tabularx}
\usepackage{enumitem}
\usepackage{yfonts}
\usepackage{csquotes}
\usepackage{comment}
\usepackage{xspace}

\usepackage{xargs}
\usepackage{xparse}

\usepackage{color}

\usepackage{url}

\newcommand{\deriv}[1]{\vdash_{#1}}       
\newcommand{\supp}[1]{\Vdash_{#1}} 
\newcommand{\arenasupp}[1]{\Vdash_{#1}'}  
\newcommand{\wstrat}[1]{\blacktriangleright_{#1}}            

\newcommand{\base}[1]{\mathcal{#1}}                    
\newcommand{\arena}[1]{\mathbb{#1}}                     
\newcommand{\sarena}{\mathbb{N}}                       
\newcommand{\sbase}{\mathcal{N}}                       

\newcommand{\brule}[1]{\triangleright{#1}}         

\newcommand{\atree}[2]{#1 \Rightarrow #2}              

\newcommand{\tobase}[1]{\mathrel{\textsf{Base}}(#1)}            
\newcommand{\toarena}[1]{\mathrel{\textsf{Arena}}(#1)}           

\newcommand{\shrp}[1]{#1^{\natural}}                          
\newcommand{\flatset}[1]{#1^{\flat}}                    
\newcommand{\shrpset}[1]{#1^{\natural}}                       

\newcommand{\At}{\mathit{At}}                           

\newcommand{\NJ}{\mathrm{NJ}}
\newcommand{\IPL}{\textsf{IPL}}

\title{Inferentialist Game Semantics \\ (Extended Abstract)} 
\titlerunning{Inferentialist Game Semantics}

\author{Joaquim T. Waddington\footnote{Corresponding author.}}{University College London}
{joaquim.waddington.24@ucl.ac.uk}{}{}
\author{Alexander V. Gheorghiu}{University College London}
{alexander.v.gheorghiu@gmail.com}{}{}
\author{David J. Pym}{UCL \& Institute of Philosophy, University of London}
{david.pym@sas.ac.uk}{}{}
\authorrunning{Waddington, Gheorghiu, Pym} 

\Copyright{Joaquim T. Waddington, Alexander V. Gheorghiu, David J. Pym}

\ccsdesc[100]{\textcolor{red}{Logic}}

\keywords{Inferentialism, game semantics, proof-theoretic semantics, reductive logic}

\relatedversion{} 

\acknowledgements{Waddington is supported by the Leverhulme Trust grant RPG-2024-196 and has received funding from the European Union’s Horizon 2020 research and innovation program under the Marie Skłodowska-Curie grant agreement Number 101299559. Pym is grateful to UCL for its support of his sabbatical at the University of London's Institute of Philosophy and to the Institute for its generous hosting. The authors are grateful to Pinaki Chakraborty and Elaine Pimentel for their interest and support.}

\begin{document}

\maketitle

\begin{abstract}
Game semantics is an elegant approach to the formal semantics of reasoning and computation that grounds model-theoretic concepts of truth and validity in game-theoretic concepts that emphasize the dynamic and interactive aspects of logical reasoning. In Hyland-Ong games, plays are traces of interactions between a player and an environment and such games provide a naturally appealing semantics for computation that is derived from proof-search in logical systems. Such a semantics can be seen as providing an intensional theory of meaning for systems of logic in terms of (the computation of) proofs. In logic, an intensional theory of meaning for systems of logic is offered by proof-theoretic semantics; in particular, by base-extension semantics (B-eS), in which the model-theoretic interpretation of atomic propositions in a satisfaction relation is replaced by a validity relation which uses provability in `bases' of atomic rules. We establish a fully abstract correlation between B-eS and Hyland-Ong game semantics, employing techniques similar to those used by Sandqvist to give a sound and complete B-eS for intuitionistic propositional logic. 
We illustrate our semantics through the example of 
4x4 Sudoku. 
\end{abstract}

\section{Introduction}
\label{sec:introduction}
 
 Game semantics is an elegant approach to the formal semantics of reasoning and computation that grounds model-theoretic concepts of truth and validity in game-theoretic concepts that emphasize the dynamic and interactive aspects of logical reasoning. A formula is interpreted by a two-player game between a \emph{Proponent}, who defends a claim, and an \emph{Opponent}, who represents the environment. The game specifies which challenges Opponent may issue and how Proponent may meet them; a proof, or a program, is then interpreted as a \emph{winning strategy} --- a policy by which Proponent meets every challenge that Opponent can mount. The dialogical analysis of logical validity in this style goes back to Lorenzen~\cite{Lorenzen1960,LorenzenLorenz1978}, for whom a formula is intuitionistically valid just in case Proponent has a winning strategy in the associated dialogue --- see Felscher~\cite{Felscher1985} for a careful technical development. In the semantics of computation, the dialogical idea was sharpened into a systematic approach in the work of Hyland and Ong~\cite{HYLAND2000285}. The methodology has been extended several times; see, for example, Blass~\cite{Blass1992} and Abramsky et al.~\cite{AbramskyJagadeesan1994, AJM2000}, and Nickau~\cite{Nickau1994}.
 
Two features of this picture matter for the present paper. First, game semantics is \emph{intensional}: a strategy records not merely \emph{that} a claim is defensible but \emph{how} it is defended, move by move. Second, the interaction it describes reads naturally as \emph{proof-search}: Opponent's challenges decompose a putative conclusion into sufficient sub-goals, and Proponent's responses discharge them. Indeed, Hyland--Ong games have been used to give a semantics for proof-search in reductive logic~\cite{Pym_ReductiveLogic} (game semantics for proof-search has also been considered in~\cite{MS2006}). Read this way, game semantics provides an intensional theory of meaning for systems of logic in terms of (the computation of) proofs. 
 
An alternative intensional theory of meaning has been developed under the title \emph{proof-theoretic semantics} (P-tS). In this framework, the meaning of the logical constants is explicated not by reference to truth in models but by reference to proof-theoretic constructions. P-tS may be seen as a logical realization of \emph{inferentialism} \cite{Brandom1994,Brandom2000,Brandom2024}, as developed by Dummett~\cite{Dummett1975,Dummett1991} and Prawitz~\cite{Prawitz1971ideas,Prawitz2006meaning}, in which the meaning of linguistic and logical constructs is determined by their use. P-tS in the sense we require here originates with Prawitz's `Ideas and Results in Proof Theory'~\cite{Prawitz1971ideas} (Appendix~A), in which Prawitz seeks a theory of validity for (natural deduction) \emph{proofs} that is grounded not in terms of denotations but in a pre-logical --- that is, connective-free --- notion of provability in a \emph{base} of atomic rules; see Schroeder-Heister~\cite{Schroeder2007modelvsproof} for the contrast with model-theoretic semantics. We defer the details to Section~\ref{sec:bes}. Concretely, the version of P-tS we adopt here is called \emph{base-extension semantics} and originates with Sandqvist~\cite{Sandqvist2005inferentialist,Sandqvist_Tor}.
 
As games and P-tS offer two intensional theories of meaning it is natural to ask if any correspondence can be made between them. That is, can the concepts of game semantics be reconstructed from the primitives of P-tS, so that the games are themselves proof-theoretic in character rather than an externally imposed model? 
In this paper, we answer this question positively for \emph{intuitionistic propositional logic} ($\IPL$). We reconstruct the basic concepts of Hyland--Ong game semantics from the primitives of B-eS according to the following dictionary: a base of atomic rules is an \emph{arena}; an argument from assumptions is a \emph{play}; a proof is a \emph{winning strategy}. Replacing provability-in-a-base by the existence of a winning strategy in the corresponding arena throughout Sandqvist's clauses yields a \emph{game-extension semantics} for $\IPL$: our main logical result is that $\IPL$ is sound and complete for it.  
 
Section~\ref{sec:bes} reviews Sandqvist's P-tS for $\IPL$ \cite{Sandqvist_Tor}, defining bases, derivability in a base, and the support relation; these definitions are used throughout the paper. Section~\ref{sec:games} develops the game-theoretic apparatus from the inferentialist point of view realized by P-tS: atomic arenas, moves, plays, and (winning) strategies, with the dictionary above made precise. The point of departure is the use of Hyland--Ong games in~\cite{Pym_ReductiveLogic} to give a semantics for proof-search in reductive logic. Section~\ref{sec:sudoku} gives a substantial worked example, namely $4 \times 4$ Sudoku, which shares the player-vs-environment setup of the game semantics for proof-search in~\cite{Pym_ReductiveLogic} while being a familiar game that is not immediately about logical consequence. Section~\ref{sec:arenas-and-bases} establishes the key properties of the basic setup: the correspondence between derivability in a base and winning strategies in the associated arena. Section~\ref{sec:metatheory} then delivers the main result: NJ is sound and complete for the validity relation of the game-extension semantics. Finally, Section~\ref{sec:discussion} summarizes the contribution and suggests directions for further research. Some proofs are deferred to the appendices.
 
\noindent \textbf{Notations \& Conventions.}
Lower-case italic letters $p, q, r, s$, etc.\ denote propositional atoms, and upper-case italic letters $P, Q, R, S$, etc.\ denote finite (possibly empty) sets thereof. It is essential that we have a denumerable set of atoms $\At = \{p_1, p_2, \ldots\}$.
For formulae, we use lower-case Greek letters $\phi, \psi, \gamma$, etc., and for finite (possibly empty) sets of formulae upper-case Greek letters $\Gamma, \Delta, \Sigma$, and so on. We write $\base{B}, \base{C}, \base{D}$, etc.\ for bases, $\deriv{\base{B}}$ for
derivability in a base $\base{B}$, and $\supp{\base{B}}$ for the support relation relative to a base $\base{B}$. For arenas, we use $\arena{A}, \arena{B, \arena{C}}$ etc., and $\arenasupp{\arena{A}}$ for the support relation relative to an arena $\arena{A}$. We write $\vdash_{\NJ}$ to denote provability/consequence
in the natural deduction system NJ \cite{Gentzen1935,Prawitz1965} (see Figure~\ref{fig:nj} in Appendix~\ref{app:metatheory}).
 
\section{Background: Proof-theoretic Semantics}
\label{sec:bes}

The version of P-tS we use in this paper is the \emph{base-extension semantics} (B-eS) for $\IPL$ given by Sandqvist~\cite{Sandqvist_Tor}.
 This section defines the semantics in full.

The primitive of  B-eS is the idea of a \emph{base} --- that is, a set of rules over atomic formulae (i.e., propositional atoms, not $\bot$). They function as inferential models in caparison to the denotational models of Tarski/Beth/Kripke semantics. No logical constants occurs anywhere in a base; this is what makes derivability in a base a pre-logical notion, fit to ground a semantics for the connectives without circularity. The logical constants are defined inductively by a semantic judgment called \emph{support}.

 
\begin{definition}[Base rule]
\label{def:base-rule}
A \emph{$0$th-level base rule} is a singleton $\langle r \rangle$, where $r$ is an
atom. A \emph{$1$st-level base rule} is an ordered pair $\langle Q, r \rangle$,
where $r$ is an atom and $Q$ is a finite (possibly empty) set of atoms. An
\emph{$(n+1)$th-level base rule} is an ordered pair $\langle Q, r \rangle$, where
$r$ is an atom and $Q$ is a finite set of $k$th-level base rules with $k \leq n$. \fillBox
\end{definition}
 
We identify $\langle \emptyset, r \rangle$ with $\langle r \rangle$. For base rules we write
\[
(P_1 \brule q_1), \dots, (P_n \brule q_n) \brule r
\]
in place of $\langle \{ \langle P_1, q_1 \rangle, \ldots, \langle P_n, q_n \rangle \}, r \rangle$.
Such a rule is meant to be read as: \emph{given derivations of $q_1$ through $q_n$, discharge the premisses $P_1$ through $P_n$ and conclude $r$}.
\begin{definition}[Base]
\label{def:base}
A \emph{base} $\base{B}$ is a (possibly infinite, possibly empty) set of base rules. A base $\base{C}$ \emph{extends} $\base{B}$, written $\base{C} \supseteq \base{B}$, iff every rule of $\base{B}$ is a rule of $\base{C}$. \fillBox
\end{definition}
\begin{definition}[Derivability in a base]
\label{def:derivability-in-a-base}
Let $\base{B}$ be a base. The \emph{derivability relation} $\deriv{\base{B}}$,
relating sets of atoms to atoms, is defined
inductively by the following clauses:
\[
\begin{array}{rl}
    \mbox{\rm (Ref)} & \mbox{if $p \in S$, then $S \deriv{\base{B}} p$} \\
    \mbox{\rm (App)} & \mbox{if $(Q_1 \brule q_1), \ldots, (Q_n \brule q_n)
        \brule r$ is a rule of $\base{B}$ and
        $S \deriv{\base{B} \cup Q_i} q_i$, for each $i = 1,...,n$,} \\
        & \mbox{then $S \deriv{\base{B}} r$}
\end{array}
\]
For this to make sense, we identify an atom $p$ with the
$0$th-level rule $\langle p \rangle$. \fillBox
\end{definition}
\begin{figure}[t]
    \hrule
    \begin{align}
        &\supp{\base{B}} p
            &&\text{iff} \qquad\deriv{\base{B}} p
            \tag{At} \\
        &\supp{\base{B}} \phi \to \psi
            &&\text{iff} \qquad\phi \supp{\base{B}} \psi
            \tag{$\to$} \\
        &\supp{\base{B}} \phi \land \psi
            &&\text{iff} \qquad\supp{\base{B}} \phi \text{ and } \supp{\base{B}} \psi
            \tag{$\land$} \\
        &\supp{\base{B}} \bot
            &&\text{iff} \qquad\supp{\base{B}} p \text{, for every atom } p
            \tag{$\bot$} \\
        &\supp{\base{B}} \phi \lor \psi
            &&\text{iff} \qquad\text{for all } \base{C} \supseteq \base{B} \text{ and every atom } p, \notag \\
            & && \qquad \qquad\text{if } \phi \supp{\base{C}} p \text{ and } \psi \supp{\base{C}} p \text{, then } \supp{\base{C}} p
            \tag{$\lor$} \\
        &\Delta \supp{\base{B}} \phi
            &&\text{iff} \qquad \text{for all } \base{C} \supseteq \base{B},\ \text{if } \supp{\base{C}} \psi \text{, for all } \psi \in \Delta \text{, then } \supp{\base{C}} \phi
            \tag{Inf}
    \end{align}
     \caption{Sandqvist's B-eS for $\IPL$ validity \cite{Sandqvist_Tor}}\vspace{1mm}
    \hrule
    \label{fig:sandqvist-ipl}
\end{figure}
\begin{remark}[Restriction]
\label{rem:restriction}
Assuming an indefinite supply of fresh atoms, bases comprising at most
$2$nd-level rules are as expressive as bases of arbitrary level. That is,
following Stafford et al.~\cite{Stafford2026-STALOP-5}: for any base $\base{B}$, there is an at
most $2$nd-level base $\base{B}'$ such that
$\deriv{\base{B}} p$ iff  $\deriv{\base{B}'} p$,
for every atom $p$ occurring in $\base{B}$. \fillBox
\end{remark}

\begin{definition}[Support in a Base]
\label{def:support}
Let $\base{B}$ be a base. The \emph{support} relation $\supp{\base{B}}$,
relating finite (possibly empty) sets of formulae to formulae, is defined
inductively by the clauses of Figure~\ref{fig:sandqvist-ipl}, in which the base
case (At) is the derivability relation of
Definition~\ref{def:derivability-in-a-base}, and in (Inf) $\Delta \neq \emptyset$. We say that $\Gamma$
\emph{supports} $\phi$, written $\Gamma \Vdash \phi$, iff
$\Gamma \supp{\base{B}} \phi$ for every base $\base{B}$. \fillBox
\end{definition}

At first glance, the semantics seems to follows  Kripke/Beth semantics in a proof-theoretic key. This is misleading: note the form of the clause for disjunction. The efficacy of this choice is discussed extensively in~\cite{Sandqvist_Tor} and elsewhere (e.g.~\cite{GPRR-ContCompPtS2026}). The salient point is that using the definition \,
\[
\supp{\base{B}} \phi \lor \psi \qquad \mbox{iff} \qquad \supp{\base{B}} \phi \text{ or } \supp{\base{B}} \psi
\]
does \emph{not} give $\IPL$~\cite{Piecha2015failure,Piecha2016completeness,Piecha2019incompleteness,Stafford2026-STALOP-5}. 
 
\begin{theorem}[Sandqvist~\cite{Sandqvist_Tor}]
\label{thm:sandqvist-soundness-completeness}
Let $\Gamma$ be finite. Then $\Gamma \vdash_{\NJ} \phi$ iff $\Gamma \Vdash \phi$. \fillBox
\end{theorem}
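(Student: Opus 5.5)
We prove the two directions separately, following Sandqvist's original strategy.

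\emph{Soundness} ($\Gamma \vdash_{\NJ} \phi$ implies $\Gamma \Vdash \phi$). We proceed by induction on the height of an NJ derivation and show that every rule preserves support at every base $\base{B}$. We first establish three elementary facts.
\begin{enumerate}
\item Monotonicity: if $\Gamma \supp{\base{B}} \phi$ and $\base{C} \supseteq \base{B}$, then $\Gamma \supp{\base{C}} \phi$. This follows by induction on $\phi$, using that $\deriv{\base{B}}$ is monotone in $\base{B}$ and that the clauses ($\lor$) and (Inf) already quantify over all extensions.
\item A generalisation of ($\lor$) from atoms to arbitrary formulae: if $\supp{\base{B}} \phi \lor \psi$, $\phi \supp{\base{B}} \chi$ and $\psi \supp{\base{B}} \chi$, then $\supp{\base{B}} \chi$. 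This is proved by induction on $\chi$. The atomic case is the clause itself. The case $\chi = \chi_1 \land \chi_2$ is componentwise. For $\chi = \chi_1 \to \chi_2$, pass to an extension supporting $\chi_1$ and use monotonicity. For $\chi = \bot$, apply the atomic case to every atom.
\item Ex falso: from $\supp{\base{B}} \bot$ we get $\supp{\base{B}} \chi$ for every $\chi$, by a similar induction on $\chi$.
\end{enumerate}
With these facts each introduction and elimination rule is routine. Implication introduction and elimination use (Inf); disjunction elimination is fact~2; $\bot$-elimination is fact~3. Disjunction introduction holds because any $\base{C} \supseteq \base{B}$ with $\phi \supp{\base{C}} p$ and $\supp{\base{C}} \phi$ gives $\supp{\base{C}} p$.

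\emph{Completeness} ($\Gamma \Vdash \phi$ implies $\Gamma \vdash_{\NJ} \phi$). We use Sandqvist's flattening construction. Let $\Xi$ be the set of all subformulae of $\Gamma \cup \{\phi\}$. Injectively assign to each non-atomic $\chi \in \Xi$ a fresh atom $\chi^\flat$ not occurring in $\Gamma, \phi$, and set $p^\flat = p$ for atoms; this is possible because $\At$ is denumerable. Define the base $\base{N}$ to contain the atomic simulations of the NJ introduction and elimination rules for every connective occurrence in $\Xi$. For example, $\to$-introduction becomes $(\phi^\flat \brule \psi^\flat) \brule (\phi\to\psi)^\flat$. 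Disjunction elimination is included for \emph{every} atom $p$: $(\phi\lor\psi)^\flat, (\phi^\flat \brule p), (\psi^\flat \brule p) \brule p$. Ex falso becomes $\bot^\flat \brule p$ for every atom $p$.

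The key lemma is that for every $\chi \in \Xi$ and every $\base{C} \supseteq \base{N}$,
\[
\supp{\base{C}} \chi \quad\text{iff}\quad \deriv{\base{C}} \chi^\flat .
\]
We prove it by induction on $\chi$. The disjunction case is the delicate one:
\begin{itemize}
\item Left-to-right: instantiate the ($\lor$) clause at $\base{C}$ with the atom $(\phi\lor\psi)^\flat$, using the introduction rules of $\base{N}$ and the induction hypothesis.
\item Right-to-left: given $\base{D} \supseteq \base{C}$ and an atom $p$ with $\phi \supp{\base{D}} p$ and $\psi \supp{\base{D}} p$, extend $\base{D}$ by the rule $\phi^\flat$. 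The induction hypothesis then gives $\phi^\flat \deriv{\base{D}} p$, and likewise $\psi^\flat \deriv{\base{D}} p$. The elimination rule then yields $\deriv{\base{D}} p$.
\end{itemize}
This step needs care with derivability under hypotheses. We use that $S \deriv{\base{D}} p$ iff $\deriv{\base{D} \cup S} p$, and that $\base{D} \cup \{\phi^\flat\}$ still extends $\base{N}$.

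The $\bot$ and $\to$ cases are handled similarly via (Inf) and the ex falso rules. We expect the disjunction case to be the main obstacle.

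Finally, from $\Gamma \Vdash \phi$ we get $\Gamma \supp{\base{N}} \phi$. The lemma converts this to $\Gamma^\flat \deriv{\base{N}} \phi^\flat$. Next we show by induction on $\deriv{\base{N}}$ derivations that any such derivation of $\chi^\flat$ from $S^\flat$ translates into an NJ derivation of $\chi$ from $S$. The translation replaces each fresh atom by its formula, so each rule of $\base{N}$ becomes the corresponding NJ rule. This gives $\Gamma \vdash_{\NJ} \phi$.
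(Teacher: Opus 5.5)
Your proof is correct and is, in substance, Sandqvist's own argument, which is what the paper relies on here. The paper simply cites \cite{Sandqvist_Tor}, and its appendix recalls the same simulation base $\sbase$ and the two properties ($\dagger$) and ($\ddagger$) that your key lemma and back-translation establish. You are right to take the $\lor$- and $\bot$-elimination simulation rules for \emph{every} atom, as Sandqvist does: the right-to-left disjunction case needs them for atoms outside the range of $(-)^\flat$.

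The real difference is how rule levels are handled. The paper's bases may contain rules of any level, while Sandqvist's have level at most two, and the paper closes this gap by appeal to Remark~\ref{rem:restriction}. Your argument never uses a level bound. Monotonicity, the generalised disjunction elimination, ex falso, and the key lemma for arbitrary $\base{C} \supseteq \sbase$ all go through verbatim for rules of any level. So you obtain the theorem directly for the paper's definitions. You also avoid having to carry the support relation, which quantifies over all extensions, across a level reduction that, as the Remark is stated, only preserves atomic derivability in a single base.

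Two routine points need filling in.
\begin{enumerate}
\item Your induction for fact~2 omits the case $\chi = \chi_1 \lor \chi_2$. Given $\base{C} \supseteq \base{B}$ and an atom $p$ with $\chi_1 \supp{\base{C}} p$ and $\chi_2 \supp{\base{C}} p$, show $\phi \supp{\base{C}} p$ and $\psi \supp{\base{C}} p$ using monotonicity and the ($\lor$) clause. Then apply the atomic case at $\base{C}$.
\item The back-translation must be proved for derivations in extended bases $\sbase \cup T$, with $T$ a set of atoms (equivalently, after moving $T$ into the hypotheses). This is because the (App) steps for $\to$-introduction and $\lor$-elimination have sub-derivations in $\sbase \cup \{\phi^\flat\}$ rather than in $\sbase$ itself.
\end{enumerate}
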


As Sandqvist~\cite{Sandqvist_Tor} uses only second-level bases, this follows from  Remark~\ref{rem:restriction}.  Sandqvist~\cite{Sandqvist2005inferentialist} and Makinson~\cite{makinson2014inferential} have considered also B-eS for classical logic.  Gheorghiu \cite{Gheorghiu2025-FirstOrder} has developed B-eS uniformly for classical and intuitionistic first-order predicate logic, and Gheorghiu and Pym have treated second-order logic~\cite{GP2025-P-tS-S-oL}. There is by now a substantial body of work, too extensive to survey here, on B-eS for modal and substructural logics (e.g., \cite{BuzokuPym-Modal2026,EckhardtPym2025Modal,EckhardtPym2025S5,GheorghiuGuPym2023IMLL,BI-P-tS2025,buzoku2026ILL}).

\section{Game-theoretic Semantics}
\label{sec:games}
Following Hyland and Ong~\cite{HYLAND2000285}, the picture to have in mind is of a dialogue between two players, Proponent and Opponent. A play is a sequence of moves conducted within the discipline of an \emph{arena}, which determines the legal actions. Different choices of arena and of the rules of play determine different games. Pym and Ritter~\cite{Pym_ReductiveLogic} gave a version that they showed to be identical to proof-search in intuitionistic logic: a winning strategy --- a function determining the moves Proponent should play in order to guarantee a win --- corresponds to a proof, and the sequence of moves it generates corresponds to the construction of that proof.
 
The games of this section take the Pym--Ritter games as
their point of departure, but the setup cannot simply be inherited. Our target is not merely the construction of proofs, but the proof-theoretic semantics of the previous section. Accordingly, the apparatus must be modified, and this is delicate: slightly different setups may result in quite different games. The definitions that follow effect this calibration;
that they succeed is the content of Section~\ref{sec:arenas-and-bases}.
\begin{definition}[Atomic arena]
\label{def:atomic-arena}
An \emph{atomic arena} is a forest --- that is, a set --- of rooted trees of atoms. We use $\arena{A}, \arena{B}, \arena{C}, \ldots$ to denote arenas. \fillBox
\end{definition}
 
Let $\arena{A}$ and $\arena{B}$ be arenas. We say that $\arena{B}$ \emph{extends} $\arena{A}$, written $\arena{B} \supseteq \arena{A}$, iff the set of trees of $\arena{A}$ is a subset of the set of trees of $\arena{B}$.
 
In an arena $\arena{A}$, the root nodes are \emph{P-nodes}, their children are \emph{O-nodes}, and the children of O-nodes are again P-nodes. The polarity of nodes records which player is responsible for the corresponding claims: root nodes are conclusions that Proponent may seek to establish, their children are premisses that Opponent may challenge, and so on, alternating with depth.
 
We turn to the dynamic part of the dictionary: moves and plays. As in Hyland--Ong games~\cite{HYLAND2000285}, moves are of two kinds --- questions, which demand justification, and answers, which supply or concede it --- and each player may both question and answer, according to the polarity of the node at which the move is made.
\begin{definition}[Move]
\label{def:move}
A \emph{move} $m$ is an action performed by one of the players together with a
datum, which is either an atom or a node of the arena. Each move is either a
\emph{question} or an \emph{answer}. We call a move by Proponent a
\emph{P-move} and a move by Opponent an \emph{O-move}. \fillBox
\end{definition}
 
The language of proof-search supplies useful intuitions for moves. The trees
of an arena $\arena{A}$ are the rules available to Proponent for answering
Opponent's questions. A question by Opponent is a demand for proof: we write
O?$p$ to denote Opponent asking `What is your justification for $p$?'. A
question by Proponent, correspondingly, invokes a rule: P?$q$ --- where $q$ is
a node of $\arena{A}$ --- asks `May I use the rule that concludes $q$?'.
Eventually, Opponent has no questions available and must answer instead:
O!$r$, where $r$ is an atom corresponding to an earlier question by Proponent,
denotes Opponent conceding `Very well, I grant $r$'. Proponent may use such
concessions to answer Opponent's questions: P!$s$, where $s$ is an earlier
question by Opponent, denotes Proponent declaring `So $s$ is justified'. This
intuition drives the following definition:
\begin{definition}[Play]
\label{def:play}
A \emph{play} for an arena $\arena{A}$ is a finite sequence of moves $m_0, m_1, \ldots, m_n$ satisfying the following conditions.
    \begin{enumerate}[label=(\roman*)]
        \item The move $m_0$ is an O-question, called the \emph{initial question}.
        \item There is an index $I \ge 0$ such that $m_1, \ldots, m_I$ are all O-answers,
      called the \emph{initial assumptions} (if $I = 0$ there are none).
        \item After the initial question and the initial assumptions, moves alternate between Opponent and Proponent.
        \item Every root question --- that is, every question whose node is a root of the
      forest of $\arena{A}$ --- is a P-question, and its \emph{justifying question}
      is the initial question.
\item For each question $m_j$ with $j > I$ that is not a root question, there is a
      question $m_k$ with $k < j$ whose node is the immediate predecessor, in
      $\arena{A}$, of the node of $m_j$; we call $m_k$ the \emph{justifying
      question} for $m_j$.
        \item For each answer $m_i$ with $i > I$, there is a question $m_k$ with $k < i$ at the same node of $\arena{A}$. If $m_j$ is the justifying question for $m_k$, we call $m_j$ the justifying question for $m_i$.
        \item Each question is answered at most once.
        \item The initial question is answered only once every non-initial question has been answered.
        \item For each P-answer $m_i$ there is an O-answer $m_j$ with the same label and $j < i$.
        \item The initial question is answered only if there is an O-answer to a root vertex with the same label as the initial question, or an initial assumption with the same label as the initial question.
    \end{enumerate}
    The datum of the initial question and of the initial assumptions is an atom; the datum of every other move is a node of the arena. The \emph{label} of a move is
its atom, or the atom labelling its node. \fillBox
\end{definition}
 
A strategy is Proponent's policy for continuing a play: it dictates Proponent's responses to whatever Opponent has done so far, and it must respond whenever the rules of play permit Opponent nothing further to say.
\begin{definition}[Strategy]
\label{def:strategy}
A \emph{strategy} for an arena $\arena{A}$ is a function $\sigma$ mapping
each play $\pi = (m_0,\ldots,m_k)$ whose last move is an O-move to
a (possibly empty) sequence of moves $\sigma(\pi) = (m_{k+1}, \ldots, m_n)$,
subject to the following conditions:
\begin{itemize}[align=left]
    \item \emph{(coherence)} the concatenation
          $\pi \cdot \sigma(\pi)$ is a play;
    \item \emph{(responsiveness)} if $\pi$ contains no unanswered P-questions that could be answered by Opponent in the next move, then $\sigma(\pi)$ is non-empty.
\end{itemize}
A strategy $\sigma$ is \emph{winning} for an atom $r$ from a set of
assumptions $Q$ if it satisfies the following further condition:
\begin{itemize}
    \item \emph{(winning)} every maximal play that opens with the initial
          question O?$r$ followed by the initial assumptions O!$q$, for
          $q \in Q$, and in which Proponent plays according to $\sigma$,
          contains a P-answer to the initial question.
\end{itemize}
We write $Q \wstrat{\arena{A}} r$ to express that there is a winning
strategy for $r$ from $Q$ in $\arena{A}$. \fillBox
\end{definition}
 
To conclude this section, we observe the following whose proof is deferred to Appendix~\ref{app:games}.
\begin{lemma}[Monotonicity]
\label{lem:monotonicity}
    If $Q \wstrat{\arena{A}} r$ and $\arena{B} \supseteq \arena{A}$, then $Q \wstrat{\arena{B}} r$. \fillBox
\end{lemma}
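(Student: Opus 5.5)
We prove Lemma~\ref{lem:monotonicity} by restricting to $\arena{A}$: from a winning strategy $\sigma$ in $\arena{A}$ we build a strategy $\sigma'$ in $\arena{B}$ that never uses the extra trees. The key observation is that Opponent's moves in a play are always anchored to questions already asked (conditions (v) and (vi) of Definition~\ref{def:play}), and root questions are P-questions by condition (iv). So if Proponent never asks a question at a node outside $\arena{A}$, Opponent cannot either. Every play of $\arena{B}$ consistent with such a $\sigma'$ is then already a play of $\arena{A}$, since nodes of trees in $\arena{A}$ are also nodes of $\arena{B}$ with the same polarity, labels and predecessors.

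\emph{Step 1 (definition of $\sigma'$).} For a play $\pi$ of $\arena{B}$ ending in an O-move, set $\sigma'(\pi) = \sigma(\pi)$ if $\pi$ is also a play of $\arena{A}$. For any other $\pi$, let $\sigma'(\pi)$ be any coherent and responsive continuation in $\arena{B}$; for instance, answer some pending question using an earlier O-answer, or ask a root question. These plays never arise when Proponent follows $\sigma'$ from the start, so the choice there is irrelevant for winning. The only point is that $\sigma'$ must be a total function meeting coherence and responsiveness. If no such continuation exists, we may instead take $\sigma'(\pi)$ empty whenever responsiveness does not force a move, so I would check this case carefully.

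\emph{Step 2 (invariance).} By induction on length, every play of $\arena{B}$ that starts with O?$r$ and the O!$q$ for $q\in Q$, and in which Proponent follows $\sigma'$, uses only nodes of $\arena{A}$. Proponent's moves come from $\sigma$ and so are moves of $\arena{A}$. Each Opponent move is either an answer at the node of an earlier question, or a question at a child of an earlier question's node. Since trees of $\arena{A}$ are whole trees in $\arena{B}$, such a child lies in the same tree of $\arena{A}$. Hence the play is a play of $\arena{A}$. For a play of $\arena{A}$, coherence and responsiveness of $\sigma'$ follow from those of $\sigma$, because conditions (i)--(x) and the notion of an ``unanswered P-question answerable by Opponent'' depend only on the moves actually present.

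\emph{Step 3 (winning).} Let $\pi$ be a maximal $\sigma'$-play in $\arena{B}$ from O?$r$ and $Q$. By Step 2 it is a $\sigma$-play in $\arena{A}$. The main obstacle is showing that it is also \emph{maximal} in $\arena{A}$. Suppose some move $m$ extends $\pi$ in $\arena{A}$. Then $\pi\cdot m$ is also a play of $\arena{B}$, since every condition is checked against nodes and moves that exist in both arenas. This contradicts maximality in $\arena{B}$. Since $\sigma$ is winning, $\pi$ contains a P-answer to the initial question, so $\sigma'$ is winning and $Q \wstrat{\arena{B}} r$.
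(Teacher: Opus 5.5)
Your proof is correct. It uses the same basic idea as the paper, namely running the given strategy unchanged in $\arena{B}$, but it justifies that idea differently, and more completely.

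The paper splits on whether $r \in Q$, switching to the immediate answer P!$r$ in that case. When $r \notin Q$, it uses clauses~(x) and~(vi) to locate the tree $\Psi \in \arena{A}$ rooted at $r$ that Proponent must question. It then notes that $\Psi$, and the P-polarity of its root, persist in $\arena{B}$. That only shows Proponent loses nothing in passing to $\arena{B}$. It does not address three further points:
\begin{itemize}
\item whether Opponent gains new moves in $\arena{B}$;
\item whether maximal plays of $\arena{B}$ are maximal in $\arena{A}$;
\item how the strategy is defined on plays of $\arena{B}$ that are not plays of $\arena{A}$.
\end{itemize}

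Your argument is uniform, with no case split, and supplies exactly these pieces. The first is the anchoring invariant from clauses~(iv)--(vi). This is where the fact that extension adds \emph{whole} trees is actually used: if new children could be grafted onto nodes of $\arena{A}$, Opponent would gain new challenges and Step~2 would fail. The second is the transfer of maximality in Step~3.

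The loose end you flag in Step~1 closes at once. Clause~(iv) imposes nothing on a root question beyond its justification by the initial question. A play of $\arena{B}$ that is not a play of $\arena{A}$ must use a node of some tree of $\arena{B}$, so $\arena{B}$ has a root available. Hence Proponent can always respond to such a play with a root question, and the empty-response fallback is never needed.
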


In this paper, we show that our notion of game exactly captures derivability in a base, the foundational concept of Sandqvist's B-eS. This characterization then allows us to explicate the defining clauses of the semantics, including the clause for disjunction. Before proceeding, however, we pause to develop intuitions for these definitions through an extended example that is not immediately about logical consequence: $4 \times 4$ Sudoku.
\section{Example: 4 x 4 Sudoku}
\label{sec:sudoku}
\NewDocumentCommand{\ncarena}{m}{
  \begin{tabular}{|c|c|c|c|} \hline
    #1 \\ \hline
  \end{tabular}}
 
As discussed, the concept of game developed here is modelled on proof-search.
Despite the dialogical presentation, therefore, the better analogy is with
single-player games of complete information: not chess or Go, but
solitaire or the fourteen-fifteen puzzle. The Opponent pursues no agenda of its own but
stands for the environment, granting or withholding whatever the rules
permit. In this section, we formalise $4 \times 4$ Sudoku.
 
The rules of $4 \times 4$ Sudoku are those of regular Sudoku~\cite{FelgenhauerJarvis2006}, but with a reduced board. A
\emph{state} of play is a board in which some cells, possibly none, contain
numbers from $1$ to $4$. We represent states as propositions: let
$\mathit{States} \subseteq \At$ be the set of all possible $4 \times 4$ Sudoku
states, legal and illegal alike. From
these atoms we build an arena --- which we call the \emph{Sudoku arena},
denoted $\arena{S}$ --- that models the relation between states.
 
In 
$\arena{S}$, trees have at most depth two. Each root vertex has at most one child. The root of each tree is labelled with a legal game state. The child of the root vertex is labelled with one of the possible game states accessible from the label of the root vertex. If the root vertex accesses no other game state, it has no child. Given a game state $p$ with accessible game states $q_1, \ldots, q_n$ ($n \geq 1$), there will be $n$ trees with the root vertex labelled $p$:
\begin{center}
    \begin{forest}
        [$p$[$q_1$]]
    \end{forest}
    \begin{forest}
        [$p$[$q_2$]]
    \end{forest}
    \raisebox{7mm}{\ldots}
    \begin{forest}
        [$p$[$q_n$]]
    \end{forest}
\end{center}
If $p$ is a legal game state with no accessible game states from it, then $\arena{S}$ contains a single-vertex tree labelled $p$. For example, consider the game state $p$:
    \[{\small
    \hspace{5.5cm}\ncarena{1 & 2 & 3 & 4 \\ \hline 3 & 4 & 1 & 2 \\ \hline 4 & 1 & 2 & 3 \\ \hline 2 & 3 & 4 & }}
    \]
There will be four trees with the root labelled $p$:
\[
{\small
\begin{array}{rc @{\qquad} rc @{\qquad} rc @{\qquad} rc}
p   & \ncarena{1 & 2 & 3 & 4 \\ \hline 3 & 4 & 1 & 2 \\ \hline 4 & 1 & 2 & 3 \\ \hline 2 & 3 & 4 & } & p & \ncarena{1 & 2 & 3 & 4 \\ \hline 3 & 4 & 1 & 2 \\ \hline 4 & 1 & 2 & 3 \\ \hline 2 & 3 & 4 & } & p & \ncarena{1 & 2 & 3 & 4 \\ \hline 3 & 4 & 1 & 2 \\ \hline 4 & 1 & 2 & 3 \\ \hline 2 & 3 & 4 & } & p & \ncarena{1 & 2 & 3 & 4 \\ \hline 3 & 4 & 1 & 2 \\ \hline 4 & 1 & 2 & 3 \\ \hline 2 & 3 & 4 & } \\
   &      & &      & &      & &      \\
   & \mid & & \mid & & \mid & & \mid \\
   &      & &      & &      & &      \\
q_1 & \ncarena{1 & 2 & 3 & 4 \\ \hline 3 & 4 & 1 & 2 \\ \hline 4 & 1 & 2 & 3 \\ \hline 2 & 3 & 4 & 1} & q_2 & \ncarena{1 & 2 & 3 & 4 \\ \hline 3 & 4 & 1 & 2 \\ \hline 4 & 1 & 2 & 3 \\ \hline 2 & 3 & 4 & 2} & q_3 & \ncarena{1 & 2 & 3 & 4 \\ \hline 3 & 4 & 1 & 2 \\ \hline 4 & 1 & 2 & 3 \\ \hline 2 & 3 & 4 & 3} & q_4 & \ncarena{1 & 2 & 3 & 4 \\ \hline 3 & 4 & 1 & 2 \\ \hline 4 & 1 & 2 & 3 \\ \hline 2 & 3 & 4 & 4} \\
\end{array}}
\]
 
Note that among the child vertices of $p$, only $q_1$ is a legal game state. Therefore, there will be no tree with a root vertex labelled $q_2$, $q_3$, or $q_4$. Since $q_1$ is an ending game state, there will be a tree with just one node labelled $q_1$. Therefore, Proponent has a winning strategy only if she chooses the tree $\atree{q_1}{p}$:
\begin{center}
    O?$p$, P?$p$, O?$q_1$, P?$q_1$, O!$q_1$, P!$q_1$, O!$p$, P!$p$
\end{center}
 
We consider another game, in which Opponent asks for:
    \[ {\small
    \hspace{5.5cm}\ncarena{1 & 2 & 3 & 4 \\ \hline 3 & 4 & 1 & 2 \\ \hline 4 & 1 & 2 & 3 \\ \hline  &  & 4 & } }
    \]
There is a winning strategy for Proponent:
\[
{\small
\begin{array}{rc @{\qquad} rc @{\qquad} rc @{\qquad} rc}
p   & \ncarena{1 & 2 & 3 & 4 \\ \hline 3 & 4 & 1 & 2 \\ \hline 4 & 1 & 2 & 3 \\ \hline  &  & 4 & } & q & \ncarena{1 & 2 & 3 & 4 \\ \hline 3 & 4 & 1 & 2 \\ \hline 4 & 1 & 2 & 3 \\ \hline 2 &  & 4 & } & r & \ncarena{1 & 2 & 3 & 4 \\ \hline 3 & 4 & 1 & 2 \\ \hline 4 & 1 & 2 & 3 \\ \hline 2 & 3 & 4 & } & s & \ncarena{1 & 2 & 3 & 4 \\ \hline 3 & 4 & 1 & 2 \\ \hline 4 & 1 & 2 & 3 \\ \hline 2 & 3 & 4 & 1} \\
   &      & &      & &      & &      \\
   & \mid & & \mid & & \mid & &      \\
   &      & &      & &      & &      \\
q & \ncarena{1 & 2 & 3 & 4 \\ \hline 3 & 4 & 1 & 2 \\ \hline 4 & 1 & 2 & 3 \\ \hline 2 &  & 4 & } & r & \ncarena{1 & 2 & 3 & 4 \\ \hline 3 & 4 & 1 & 2 \\ \hline 4 & 1 & 2 & 3 \\ \hline 2 & 3 & 4 & } & s & \ncarena{1 & 2 & 3 & 4 \\ \hline 3 & 4 & 1 & 2 \\ \hline 4 & 1 & 2 & 3 \\ \hline 2 & 3 & 4 & 1} &  &  \\
\end{array}}
\]
\begin{center}
    O?$p$, P?$p$, O?$q$, P?$q$, O?$r$, P?$r$, O?$s$, P?$s$, O!$s$, P!$s$, O!$r$, P!$r$, O!$q$, P!$q$, O!$p$, P!$p$
\end{center}
\section{Faithfulness and Adequacy}
\label{sec:arenas-and-bases}
This section establishes
that the game-theoretic apparatus of
Section~\ref{sec:games} is an exact counterpart of the proof-theoretic
apparatus of Section~\ref{sec:bes}. Two grades of correspondence must be
distinguished:
\begin{itemize}
    \item \emph{Adequacy}. This is extensional equivalence: the game interpretation is
adequate for derivability in a base when an atom is derivable from given
assumptions iff Proponent has a winning strategy for it in the corresponding
arena. It settles the yes/no question of derivability and nothing
more.
\item \emph{Faithfulness}. This operates one level down, at the level of the
objects themselves rather than their mere existence: it concerns whether
derivations and strategies correspond, not merely whether they coexist.
\end{itemize}
 
What we state and
prove in this section is adequacy: Theorem~\ref{thm:game-soundness} shows
that derivability in a base yields winning strategies in the corresponding
arena, Theorem~\ref{thm:strategy-derivability} establishes the converse, and
Corollary~\ref{cor:adequacy} records the biconditional. The proofs, however,
deliver more than the statements record: both are compositional
translations, under which plays track the construction of proofs in a base.
We return to this point in Remark~\ref{rem:faithfulness}.
 
The correspondence rests on a simple observation: the data of a base is
exactly the data of an arena. An atomic rule, read game-theoretically, is a
tree: its conclusion is a node at which the rule may be invoked, its
premisses are the challenges to which invoking it exposes the invoker, and
its discharged hypotheses are the concessions available for meeting those
challenges. Consider, for example, the rule \emph{from `Amber is a fox' and
`Amber is female', infer `Amber is a vixen'}, which determines the arena
\begin{center} \small
    \begin{forest}
        [Amber is a vixen[Amber is a fox][Amber is female]]
    \end{forest}
\end{center}
relating the three logic-free sentences. A dialogue in this arena proceeds as
an argument would: Opponent opens by asking whether Amber is a vixen and
granting whatever assumptions are in play --- say, that Amber is a fox and
that Amber is female; Proponent invokes the rule by playing a question at the
root; Opponent challenges the premisses by questioning the child nodes;
Proponent answers each challenge, as she may, since Opponent has conceded the
corresponding facts; and, every premiss being met, Opponent concedes the
root, whereupon Proponent answers the initial question and the argument is
won. A strategy that wins in this way against every line of challenge is
nothing other than a derivation in the base.
 
The comparison proceeds according
to the following dictionary, which the results of this section prove exact
at the level of consequence:
\begin{center}
\begin{tabular}{c|cc}
    \emph{level} & \emph{proof-theoretic concept} & \emph{game-theoretic concept} \\[1mm] \hline
    data & base & arena \\
    dynamics & proof-search & play \\
    consequence & derivability & winning strategy
\end{tabular}
\end{center}
 
We begin by showing that bases and arenas are
isomorphic as data structures. The isomorphism is witnessed by a pair of
translations, defined by structural recursion, that convert atomic trees to
base rules and back.
\begin{definition}[Translations]
\label{def:arena-rule-function}
Define $\tobase{-}$ on rooted trees of atoms and $\toarena{-}$ on base rules by mutual
structural recursion:
\[
  \tobase{(\atree{}{p})} := \langle p \rangle \qquad   \tobase{(\atree{\Psi_1, \ldots, \Psi_n}{p})} :=
      \langle \{\tobase{\Psi_1}, \ldots, \tobase{\Psi_n}\}, p \rangle
\]
and
\[
  \toarena{\langle p \rangle} := (\atree{}{p}) \qquad \toarena{\langle \{\rho_1, \ldots, \rho_n\}, p \rangle} :=
      (\atree{\toarena{\rho_1}, \ldots, \toarena{\rho_n}}{p})
\]
where the $\Psi_i$ are atomic trees and the $\rho_i$ base rules. Extend both maps
elementwise: 
\begin{center}
\hspace{10mm}$\tobase{\arena{A}} := \{ \tobase{\Psi} \mid \Psi \in \arena{A} \}$ \quad and \quad
$\toarena{\base{B}} := \{ \toarena{\rho} \mid \rho \in \base{B} \}$ \fillBox
\end{center} 

\end{definition}
 
That the translations constitute an isomorphism is the content of the
following lemma, whose proof is deferred to
Appendix~\ref{app:sec-arenas-and-bases}.
\begin{lemma}
\label{lemma:inverse}
The functions $\tobase{-}$ and $\toarena{-}$ are mutually inverse: for every base $\base{B}$ and every arena $\arena{A}$,
$\toarena{\tobase{\arena{A}}} = \arena{A}$ and $\tobase{\toarena{\base{B}}} = \base{B}$.
\fillBox
\end{lemma}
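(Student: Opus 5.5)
The plan is to prove the two identities first at the level of single trees and single rules, and then lift them to arenas and bases.

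\textbf{Step 1 (trees).} I will show that $\toarena{\tobase{\Psi}} = \Psi$ for every finite rooted atomic tree $\Psi$, by structural induction on $\Psi$, equivalently on its height.

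\emph{Base case.} Here $\Psi = (\atree{}{p})$. Then
\[
\toarena{\tobase{\Psi}} = \toarena{\langle p\rangle} = (\atree{}{p}).
\]

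\emph{Inductive case.} Here $\Psi = (\atree{\Psi_1,\ldots,\Psi_n}{p})$ with $n \geq 1$. Unfolding the definitions gives
\[
\toarena{\tobase{\Psi}} = \toarena{\langle \{\tobase{\Psi_1},\ldots,\tobase{\Psi_n}\}, p\rangle} = (\atree{\toarena{\tobase{\Psi_1}},\ldots,\toarena{\tobase{\Psi_n}}}{p}).
\]
By the induction hypothesis this is $(\atree{\Psi_1,\ldots,\Psi_n}{p}) = \Psi$.

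\textbf{Step 2 (rules).} Symmetrically, I will show that $\tobase{\toarena{\rho}} = \rho$ for every base rule $\rho$, by induction on the level of $\rho$ as in Definition~\ref{def:base-rule}.

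\emph{Level $0$.} The rule is $\langle p \rangle$, which the paper identifies with $\langle \emptyset, p\rangle$. It maps to the one-node tree and back to $\langle p\rangle$.

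\emph{Level $n+1$.} The rule is $\langle\{\rho_1,\ldots,\rho_k\},p\rangle$ with each $\rho_i$ of level at most $n$. The same unfolding as in Step~1 applies, followed by the induction hypothesis on each $\rho_i$.

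Both steps rely on one point of convention. The paper writes the children of a node as a list $\Psi_1,\ldots,\Psi_n$, while the premisses of a rule form a set. To make the recursion well defined, I will read the children of a tree node as an (unordered) set of subtrees, so that duplicates collapse in the same way that repeated premisses do. This is the notion of tree under which Definition~\ref{def:arena-rule-function} makes sense. Under it, the images $\{\tobase{\Psi_i}\}$ and $\{\toarena{\rho_i}\}$ are sets, and equality of sets of children is exactly what the induction hypotheses deliver pointwise.

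\textbf{Step 3 (lifting).} The extensions to arenas and bases are elementwise images. Hence
\[
\toarena{\tobase{\arena{A}}} = \{\toarena{\tobase{\Psi}} \mid \Psi\in\arena{A}\} = \arena{A}
\]
by Step~1, and likewise $\tobase{\toarena{\base{B}}} = \base{B}$ by Step~2.

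\textbf{Main obstacle.} The only genuine obstacle is not computational. It is making the tree/rule correspondence well defined under the identifications the paper has already made: the set-versus-list reading of children, and the identification of $\langle\emptyset,p\rangle$ with $\langle p\rangle$ so that the base clauses match. I will also need trees to be finite so that the induction is well founded. This holds because rule premisses are finite sets of lower-level rules, and I will take atomic trees to be finite, well-founded trees. Once these conventions are fixed, the induction is routine.
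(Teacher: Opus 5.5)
Your proposal is correct and follows the paper's own proof. It uses induction on the depth of atomic trees for $\toarena{\tobase{\Psi}} = \Psi$ and induction on the level of base rules for $\tobase{\toarena{\rho}} = \rho$, each step unfolding Definition~\ref{def:arena-rule-function} twice and then applying the induction hypothesis. The conventions you state are left tacit in the paper but are exactly what the identities require: children of a node read as a finite set of subtrees, atomic trees taken to be finite, and the elementwise lift to arenas and bases. Your one-node base case is also slightly cleaner than the paper's depth-one base case.
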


The translations thus identify bases with arenas at the level of data. It
remains to show that the identification extends to the level of consequence:
derivability in a base obtains precisely when there is a winning strategy in
the corresponding arena. We treat the two directions separately, beginning
with the soundness of the game interpretation.
\begin{theorem}[Soundness for the game interpretation]
\label{thm:game-soundness}
    Let $\base{B}$ be a base, let $S$ be a finite set of atoms, and let $p$
    be an atom. If $S \deriv{\base{B}} p$, then
    $S \wstrat{\toarena{\base{B}}} p$.
\end{theorem}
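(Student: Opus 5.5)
The proof goes by induction on the derivation of $S \deriv{\base{B}} p$ (Definition~\ref{def:derivability-in-a-base}). The induction hypothesis must be strengthened, because the (App) clause changes both the base and the context: premisses are derived in $\base{B} \cup Q_i$, where $Q_i$ is a set of rules that later appear as children of a node of the arena. We therefore prove a \emph{local} statement. Whenever $S \deriv{\base{C}} q$ with $\base{C} = \base{B} \cup Q_1 \cup \dots \cup Q_k$, and we are in a play for $\toarena{\base{B}}$ containing an unanswered O-question at a node labelled $q$, Proponent has a substrategy that eventually P-answers that question. This holds provided that each atom in $S$ has been conceded by an O-answer, and each rule in the $Q_j$ corresponds to an O-node already questioned by Proponent, whose subtree Proponent may reopen. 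The theorem is the special case in which the pending question is the initial question O?$p$, the concessions are the initial assumptions O!$s$ for $s \in S$, and there are no extra rules.

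\emph{(Ref).} If $q \in S$, or more generally $q$ has already been conceded by Opponent, Proponent answers P!$q$ immediately. This is licensed by conditions (ix) and (x) of Definition~\ref{def:play}, which require an earlier O-answer or initial assumption with the same label.

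\emph{(App).} Suppose the last step uses a rule $(P_1 \brule q_1), \dots, (P_n \brule q_n) \brule q$ of $\base{C}$. If the rule lies in $\base{B}$, its image is a root tree of $\toarena{\base{B}}$, and Proponent plays the root question P?$q$. If instead it is a hypothesis rule from some $Q_j$, it corresponds to a subtree at an O-node, and Proponent questions that node, justified by its predecessor question. By Lemma~\ref{lemma:inverse} the children of the questioned node are exactly the translations $\toarena{P_i \brule q_i}$. Opponent may now either question some child O?$q_i$ or, once every child question has been answered, concede O!$q$. On O?$q_i$, the children of that node are $\toarena{P_i}$: leaves for atoms (which Opponent will concede once Proponent questions them) and deeper subtrees for higher-level rules. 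Proponent then applies the induction hypothesis to $S \deriv{\base{C} \cup P_i} q_i$, with $P_i$ now available as questionable nodes. When all challenges are answered, Opponent concedes O!$q$, and Proponent answers the pending question by P!$q$. The strategy is assembled by cases on the current play, so coherence and responsiveness of Definition~\ref{def:strategy} are checked case by case. Winning follows because every maximal play consistent with this strategy must terminate with the pending question answered.

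The main obstacle is making this hypothesis-as-node correspondence precise. We need to verify that a discharged atom $s \in P_i$ really becomes available: Proponent questions the leaf node $s$ and Opponent must answer it, with condition (ix) then allowing P!$s$. We also need to verify that higher-level discharged rules can be invoked repeatedly, even though condition (vii) says each question is answered only once; to handle this we allow Proponent to ask fresh questions at the same node. Finally, we must show that the interleaving of several pending subgoals, where Opponent chooses which child to challenge, is handled uniformly; here we keep a pointer for each pending question and respond locally. We would first try the simplest ordering, treating Opponent's challenges in whatever order they arrive, and use condition (viii) to ensure that the initial question is answered last.
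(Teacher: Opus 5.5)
Your argument is correct in outline, but it organizes the induction differently from the paper. The paper keeps the theorem itself as the induction hypothesis, for all bases. In the (App) case it takes winning strategies $\sigma_i$ for $S \wstrat{\toarena{\base{B} \cup Q_i}} q_i$ in the \emph{extended} arenas $\toarena{\base{B}} \cup \toarena{Q_i}$ and transplants them into the ambient play. It does this by relocating moves at the roots of $\toarena{Q_i}$ to the copies of those trees below $q_i$ in $\Psi$: a root question justified via clause~(iv) becomes one justified via clause~(v), with the same justifier O?$q_i$. You never leave $\toarena{\base{B}}$; instead you carry the hypothesis context in a position-indexed invariant, which in effect unfolds the paper's nested relocations into a single construction.

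Your route exhibits the strategy directly as the derivation laid out in one arena and avoids moving strategies between arenas. The price is an invariant that must track conceded atoms and available interior nodes, and an (App) case split on whether the rule is a root tree or a hypothesis node. The paper's route needs only $\toarena{\base{B} \cup Q_i} = \toarena{\base{B}} \cup \toarena{Q_i}$ and one relocation, which it reuses for Theorem~\ref{thm:strategy-derivability}. It also gets for free, from clause~(viii) inside the simulated play, that every question of a sub-play is answered by the time P!$q_i$ is played. Your pending questions are generally non-initial, so clause~(viii) gives you nothing there. You must show explicitly that all non-initial questions are answered before the final P!$p$, either by building this into the invariant or by a maximality argument: every open P-question can still be conceded, and every O-question raised has a handler. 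Merely invoking (viii) does not establish this.

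Two details need correcting. First, a discharged rule of $Q_j$ is the subtree rooted at a P-node whose parent O-node has been challenged by \emph{Opponent}, and Proponent invokes the rule by questioning that P-node. Your invariant instead speaks of `an O-node already questioned by Proponent'. Second, Opponent may concede O!$q$ at any time after P?$q$, not only once the child challenges are settled, and may still challenge children afterwards. Your strategy must therefore answer a non-initial pending question as soon as the concession arrives, while continuing to serve outstanding and later challenges. The paper's proof explicitly allows for this, with O!$r$ conceded `immediately after P?$r$ or after any number of challenges'.
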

\begin{proof}
By Definition~\ref{def:derivability-in-a-base}, the family of relations
$\deriv{\base{B}}$, indexed by bases $\base{B}$, is the least family closed
under (Ref) and (App). It therefore suffices to show that the family of
relations $\wstrat{\toarena{\base{B}}}$ is closed under the corresponding
conditions:
\[
\begin{array}{rl}
    \mbox{\rm (Ref)} & \mbox{if $p \in S$, then $S \wstrat{\toarena{\base{B}}} p$;} \\
    \mbox{\rm (App)} & \mbox{if $(Q_1 \brule q_1), \ldots, (Q_n \brule q_n)
        \brule r$ is a rule of $\base{B}$ and
        $S \wstrat{\toarena{\base{B} \cup Q_i}} q_i$,} \\
        & \mbox{for each $i = 1, \ldots, n$, then $S \wstrat{\toarena{\base{B}}} r$.}
\end{array}
\]
The theorem then follows by induction on the definition of
$S \deriv{\base{B}} p$.
 
\medskip
 
\noindent\emph{Case (Ref).} Suppose $p \in S$. By clauses~(i) and~(ii) of
Definition~\ref{def:play}, every play for $p$ from $S$ opens with the initial
question O?$p$ followed by the initial assumptions O!$s_1, \ldots,$ O!$s_k$,
where $s_1, \ldots, s_k$ enumerates $S$; since $p \in S$, the opening contains
the initial assumption O!$p$. Let $\sigma$ be the strategy that responds to
this opening with P!$p$. The move is legitimate: clause~(ix) is met by the
initial assumption O!$p$; clause~(x) permits answering the initial question,
again by that initial assumption; and clause~(viii) is satisfied vacuously,
the play containing no non-initial question. The resulting play
\[
\text{O?}p,\ \text{O!}s_1,\ \ldots,\ \text{O!}s_k,\ \text{P!}p
\]
is maximal --- the initial question has been answered, and by clause~(vii) no
question may be answered twice --- and it is the unique maximal play in which
Proponent follows $\sigma$. It contains a P-answer to the initial question,
so $\sigma$ is winning and $S \wstrat{\toarena{\base{B}}} p$.
 
\medskip
 
\noindent\emph{Case (App).} Suppose
$\rho = (Q_1 \brule q_1), \ldots, (Q_n \brule q_n) \brule r$ is a rule of
$\base{B}$ and, for each $i = 1, \ldots, n$, let $\sigma_i$ be a winning
strategy witnessing $S \wstrat{\toarena{\base{B} \cup Q_i}} q_i$. We
construct a winning strategy $\sigma$ witnessing
$S \wstrat{\toarena{\base{B}}} r$. We first record the shape of the arenas involved. The arena $\toarena{\{\rho\}}$ is a single
tree $\Psi$ whose root is labelled $r$, whose children are labelled
$q_1, \ldots, q_n$, and in which the subtrees below $q_i$ are exactly the
trees $\toarena{Q_i}$. Observe that
$\Psi$ is an element of $\toarena{\base{B}}$ and
$\toarena{\base{B} \cup Q_i} = \toarena{\base{B}} \cup \toarena{Q_i}$.
 
By clauses~(i) and~(ii) of Definition~\ref{def:play}, every play for $r$ from
$S$ opens O?$r$, O!$s_1$, \ldots, O!$s_k$, with $s_1, \ldots, s_k$
enumerating $S$. The strategy $\sigma$ extends this opening with the root
question P?$r$ at the root of $\Psi$, which clause~(iv) licenses: its node is
a root of the forest $\toarena{\base{B}}$, and its justifying question is the
initial question. Opponent now has two kinds of move available: by
clause~(v), Opponent may challenge a premiss, playing O?$q_i$ at a child of
the root of $\Psi$, justified by P?$r$; by clause~(vi), Opponent may instead
concede, answering P?$r$ with O!$r$. (If $n = 0$, conceding is Opponent's
only option.) These remain Opponent's only options throughout, alongside the
moves arising inside the sub-plays described below.
 
Suppose Opponent challenges with O?$q_i$. Then $\sigma$ continues by playing
according to $\sigma_i$, under the following identification: regard O?$q_i$
as the initial question, and the initial assumptions
O!$s_1, \ldots,$ O!$s_k$ of the ambient play as the initial assumptions, of a
simulated play in the arena $\toarena{\base{B} \cup Q_i}$. Moves of
$\sigma_i$ at nodes of $\toarena{\base{B}}$ carry over unchanged, since
$\toarena{\base{B}}$ is a sub-forest of both arenas. Moves of $\sigma_i$ at
nodes of $\toarena{Q_i}$ are transported to the corresponding nodes of $\Psi$
below $q_i$. The only clause of Definition~\ref{def:play} sensitive to this
relocation is the justification of questions at the roots of
$\toarena{Q_i}$: in the simulated play these are root questions, justified by
the initial question O?$q_i$ via clause~(iv), while in the ambient play the
corresponding nodes are children of $q_i$ in $\Psi$, so the same questions
are justified by O?$q_i$ via clause~(v). In either case the justifying
question is O?$q_i$, and every other justification is preserved. Clause~(x)
constrains only the answer to the ambient initial question, so it imposes
nothing on the sub-plays; clauses~(vi), (vii), and~(ix) hold in the ambient
play because they hold, move for move, in the simulated one.
 
Opponent may interleave the challenges O?$q_1, \ldots,$ O?$q_n$ and the
sub-plays they open. No interference results: justification in
Definition~\ref{def:play} relates \emph{occurrences} of moves, so each
non-initial move of the ambient play other than P?$r$ and O!$r$ descends, by
tracing justifying questions and the responses of $\sigma$, from a unique
challenge O?$q_i$, and $\sigma$ answers each O-move according to the
sub-strategy $\sigma_i$ of the challenge from which it descends. The
sub-plays therefore proceed independently, each a play of $\sigma_i$ under
the identification above. Since $\sigma_i$ is winning, every maximal
continuation of the sub-play opened by O?$q_i$ contains a P-answer to its
simulated initial question
(i.e., the move P!$q_i$)
and, since
$\sigma_i$ respects clause~(viii) in the simulated play, every question of
that sub-play has been answered by the time P!$q_i$ is played.
 
Finally, suppose Opponent concedes O!$r$ (whether immediately after P?$r$ or
after any number of challenges). Once every non-initial question of the play
has been answered, $\sigma$ answers the initial question with P!$r$. The move
is legitimate: clause~(ix) is met by O!$r$; clause~(x) is met because O!$r$
is an O-answer at a root vertex whose label coincides with that of the
initial question; and clause~(viii) holds by the hypothesis that no
non-initial question stands unanswered.
 
It remains to check that $\sigma$ is winning. Let $\pi$ be a maximal play in
which Proponent follows $\sigma$; recall that plays are finite sequences
(Definition~\ref{def:play}). The question P?$r$ must be answered in $\pi$:
while it stands unanswered, clause~(vi) provides Opponent with the legal move
O!$r$ whenever it is Opponent's turn, and responsiveness
(Definition~\ref{def:strategy}) ensures that the play cannot halt on
Proponent's turn; hence a play in which P?$r$ is unanswered is not maximal.
By the same reasoning, every challenge O?$q_i$ that Opponent raises is
answered with P!$q_i$, since $\sigma_i$ is winning, and every question
internal to a sub-play is answered before the corresponding P!$q_i$, as
observed above. Thus every non-initial question of $\pi$ is answered, O!$r$
occurs in $\pi$, and $\sigma$ concludes with P!$r$. Every maximal play in
which Proponent follows $\sigma$ therefore contains a P-answer to the initial
question, so $\sigma$ is winning and $S \wstrat{\toarena{\base{B}}} r$.
\end{proof}
\begin{theorem}[Completeness for the game interpretation]
\label{thm:strategy-derivability}
    Let $\arena{A}$ be an arena, let $P$ be a finite set of atoms, and let
    $r$ be an atom. If $P \wstrat{\arena{A}} r$, then
    $P \deriv{\tobase{\arena{A}}} r$.
\end{theorem}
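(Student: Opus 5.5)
We argue by induction on the maximal length of plays generated by a winning strategy, strengthened so that the induction can pass to extended arenas. Since plays are finite and Proponent follows a fixed $\sigma$, we expect to argue that the tree of plays consistent with $\sigma$ opening with O?$r$, O!$p$ ($p \in P$) is well-founded. Each branch terminates in a maximal play, which by the winning condition contains P!$r$. We therefore prove, by induction on this tree (bar induction on the well-founded tree of $\sigma$-plays), the statement: for every arena $\arena{A}$, finite $P$ and atom $r$, if $\sigma$ is winning for $r$ from $P$ in $\arena{A}$ then $P \deriv{\tobase{\arena{A}}} r$.

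\textbf{Case analysis on the first P-move.} We look at $\sigma$'s response to the opening O?$r$, O!$p_1,\ldots,$ O!$p_k$.

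If the response is the answer P!$r$, then clause~(viii) forbids unanswered non-initial questions, and clause~(x) demands an initial assumption labelled $r$. Hence $r \in P$, and (Ref) gives $P \deriv{\tobase{\arena{A}}} r$.

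Otherwise, by clauses~(iv)--(vi), the response is a root question P?$t$ at the root of some tree $\Psi = (\atree{\Psi_1,\ldots,\Psi_n}{t}) \in \arena{A}$. Here we must show two things.

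First, we show $t = r$. Consider the Opponent branch in which Opponent challenges each child and then concedes O!$t$ once it must. Proponent eventually answers O?$r$, and clause~(x) requires an O-answer at a root vertex labelled $r$ or $r \in P$. If $r \in P$ we are done by (Ref). Otherwise some root question labelled $r$ is conceded. To keep the argument clean, we would instead \emph{choose} the root question P?$t'$ with $t'=r$ that Opponent concedes on that branch, and reason about it. This is the delicate point. Pushing the induction through requires a stronger invariant, which we prefer: every P-answer P!$x$ occurring in a $\sigma$-play is backed by a derivation of $x$, in the base of the current context, from $P$ together with the assumptions conceded along the justification pointers.

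Second, for the tree $\Psi$ with root $r$, whose children are labelled $q_i$ with subtrees $\Psi_{ij}$, Opponent may play O?$q_i$. The restriction of $\sigma$ to the sub-play descending from O?$q_i$ is then, reversing the relocation used in the proof of Theorem~\ref{thm:game-soundness}, a winning strategy for $q_i$ from $P$ in the arena $\arena{A} \cup \{\Psi_{i1},\ldots,\Psi_{im}\}$. Questions at the children of $q_i$ become root questions justified by the new initial question. The extracted strategy wins because $\sigma$ must answer O?$q_i$ before answering O?$r$, by clause~(viii). Its plays are strictly shorter, so the induction hypothesis gives $P \deriv{\tobase{\arena{A}} \cup Q_i} q_i$, where $Q_i = \tobase{\{\Psi_{i1},\ldots\}}$. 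By Definition~\ref{def:arena-rule-function}, $\tobase{\Psi} = (Q_1 \brule q_1),\ldots,(Q_n\brule q_n)\brule r$, so (App) yields $P \deriv{\tobase{\arena{A}}} r$.

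\textbf{Main obstacle.} The hard step is precisely the extraction of the sub-strategy and the matching of the answered root with $r$. Winning plays may invoke several root questions, and Proponent may answer O?$r$ using a concession to a root question other than the first. We therefore intend the induction to be on $\sigma$-plays rather than naively on the first move. We fix a branch where Opponent never concedes prematurely, and locate the root question P?$r$ whose concession O!$r$ is used by clause~(x). That occurrence exists, because otherwise Opponent can refuse concession and the play never lets Proponent answer. We then apply the sub-strategy extraction to that occurrence. Verifying that the extracted $\sigma_i$ satisfies coherence and responsiveness reuses the clause-by-clause transport of Theorem~\ref{thm:game-soundness} in reverse, and we expect this to be routine once the occurrence-based justification structure is made explicit.
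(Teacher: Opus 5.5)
Your skeleton is the paper's. You locate, via clauses~(x) and~(vi), a root question P?$r$ whose concession licenses the final P!$r$. You relocate the sub-play below each premise $q_i$ into a game for $q_i$ from $P$ in $\arena{A}\cup\arena{B}_i$, apply the induction hypothesis, and close with (App). The steps you defer as routine are where the argument fails.

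\textbf{The induction has no foundation.} Finiteness of each play does not make the tree of $\sigma$-plays well-founded, and nothing in Definitions~\ref{def:play} and~\ref{def:strategy} forces branches to end. Take $\arena{A}$ to be the single tree with root $r$ and one child $q$, and let $\sigma$ always re-ask the root question P?$r$; clause~(iv) never forbids repetition. After every P-move Opponent can answer the latest, still unanswered P?$r$. So no $\sigma$-play is maximal, the winning condition holds vacuously, and $\emptyset \wstrat{\arena{A}} r$. Yet $r$ is not derivable from $\emptyset$ in $\tobase{\arena{A}} = \{q \brule r\}$. Under the definitions as written, then, neither your bar induction nor any other induction over plays can succeed. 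The paper's measure $\ell$, the minimal length of a maximal obstinate play, is undefined in this example too. The notion of winning strategy must first be strengthened to exclude such branches.

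\textbf{The extraction of $\sigma_i$ is not routine, and your reason for its winning is invalid.} The claim $t=r$ is false in general, since $\sigma$ may open with any root question. Fixing an occurrence of P?$r$ on one branch does not define a strategy on all plays of the sub-game. After that occurrence, $\sigma$ may answer questions still pending in the prefix, or rely on concessions made there.

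More seriously, in the ambient play the answer P!$q_i$ to O?$q_i$ is licensed by clause~(ix) alone. That is, \emph{any} earlier O-answer labelled $q_i$ suffices: one in the prefix, in a sibling sub-play, or at a non-root node. After relocation the same move answers the initial question of the sub-game, so it must satisfy clause~(x). For instance, in $\toarena{\{u \brule r,\ (u\brule w)\brule y,\ u\brule w\}}$ the sequence O?$r$, P?$r$, O?$u$, P?$y$, O?$w$, P?$u$, O!$u$, P!$u$ is a legal play:
\begin{itemize}
\item P?$u$ and O!$u$ sit at the node $u$ below $w$ in the tree for $y$;
\item the final P!$u$ answers the challenge at the child of $r$.
\end{itemize}
Relocated, that last move answers an initial question O?$u$ although no root is labelled $u$ and $u$ is not an assumption. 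Indeed $u$ is not derivable in this base.

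The same play refutes your stronger invariant as formulated. Clause~(ix) is not scoped to justification pointers, so this P!$u$ is backed by nothing along its pointers. The paper's proof does not supply this step either: it relocates only against the prefix $\pi_0,\text{P?}r$ and asserts that clauses~(vi)--(x) transfer move for move. Closing the gap needs a scoping discipline on clause~(ix), either imposed in Definition~\ref{def:play} or proved achievable for winning strategies.
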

\begin{proof}
 By
clauses~(i) and~(ii) of Definition~\ref{def:play}, every play opens with the initial question and the initial
assumptions
\[
\pi_0 := \text{O?}r,\ \text{O!}p_1,\ \ldots,\ \text{O!}p_k
\]
where $p_1, \ldots, p_k$ enumerates $P$.
Let $\sigma$ be a winning strategy witnessing $P \wstrat{\arena{A}} r$. Opponent is \emph{obstinate} in a play if, whenever he can ask a question not previously asked, he does so.
 Let $\ell$ be the minimal length of the maximal plays in which Opponent is obstinate and Proponent plays according to $\sigma$. We proceed by induction on $\ell$.
 
\medskip
 
\noindent\emph{Base case: $\ell = 1$.} In this case, there is only one move after the opening $\pi_0$. As $\sigma$ is winning, that move must be
P!$r$. By rule~(ix) of Definition~\ref{def:play}, a P-answer P!$r$ must be
preceded by an O-answer with the same label; since no move other than the
opening and P!$r$ occurs, that O-answer is one of the initial assumptions
O!$p_1, \ldots,$ O!$p_k$. Hence $r \in P$, and $P \deriv{\tobase{\arena{A}}} r$
by (Ref) of Definition~\ref{def:derivability-in-a-base}.
 
\medskip
 
\noindent\emph{Inductive step: $\ell > 1$.} Consider an arbitrary play $\pi$ for $r$ in $\arena{A}$ with assumptions $P$. By clause~(ix), a P-answer P!$r$ must be
preceded by an O-answer with the same label. If O!$r$ is in the initial segment, then $r \in P$ and $P \deriv{\tobase{\arena{A}}} r$
by (Ref). So suppose $r \notin P$.
By clause~(x), the P-answer
P!$r$ is licensed by an O-answer O!$r$ at a root vertex of $\arena{A}$. By
clause~(vi), that O-answer responds to an earlier P-question at the same
vertex; its vertex being a root of the forest, this is a root question. Thus $\pi$ has the form
\[
\pi_0,\ldots,\ \text{P?}r,\ \ldots,\ \text{O!}r,\ \text{P!}r,
\]
and there is a tree $\Psi \in \arena{A}$ whose root is labelled $r$ and at
which Proponent has played P?$r$:
\begin{center} \small
    \begin{forest}
        [$r$[$q_1$[$\arena{B}_1$]][$\ldots$][$q_n$[$\arena{B}_n$]]]
    \end{forest}
\end{center}
Let $\rho := \tobase{\Psi}$; by Definition~\ref{def:arena-rule-function}, $\rho$ is a rule of $\tobase{\arena{A}}$.
We distinguish two cases.
 
\medskip
 
\noindent\emph{Case $n = 0$.} Then $\Psi$ is the single vertex labelled $r$,
so $\rho$ is the premiss-free rule $\brule r$. Using (App), with its premiss condition
vacuously met, yields $P \deriv{\tobase{\arena{A}}} r$.
 
\medskip
 
\noindent\emph{Case $n \ge 1$.} We claim that $P \wstrat{\arena{A} \cup \arena{B}_i} q_i$ for each $i$. To this end, we construct a winning strategy $\sigma_i$ witnessing this from $\sigma$.
Let
$\theta$ be any play for $q_i$ from $P$ in $\arena{A} \cup \arena{B}_i$, and
let $\theta'$ be obtained from $\theta$ by relocating every move at a node of
$\arena{B}_i$ to the corresponding node in the copy of $\arena{B}_i$ sitting
below $q_i$ in $\Psi$ (moves at nodes of $\arena{A}$ are left unchanged). We check that
\[
\pi_i := \pi_0,\ \text{P?}r,\ \text{O?}q_i,\ \theta'
\]
is a legal play for $r$ from $P$ in $\arena{A}$. The only clause of
Definition~\ref{def:play} sensitive to the relocation is the justification of
questions at the root $q_i$ of $\arena{B}_i$: in $\theta$ such a question is a
root question justified by the initial question O?$q_i$ (clause~(iv)), whereas
in $\pi_i$ the corresponding node is a child of $r$ in $\Psi$, so the
same question is justified by O?$q_i$ via clause~(v). In either case the
justifying question is O?$q_i$, and every other justification is preserved;
clauses~(vi)--(x) transfer move for move. Hence $\pi_i$ is a play for $r$ in $\arena{A}$ assuming $P$.
We now define $\sigma_i$ by relocating $\sigma$'s responses backwards: set
$\sigma_i(\theta)$ to be the sequence obtained from $\sigma(\pi_i)$ by
returning every move at a node of $\Psi$ below $q_i$ to the corresponding node
of $\arena{B}_i$ (and leaving moves at nodes of $\arena{A}$ unchanged). This is
well defined, since $\pi_i$ ends with an O-move exactly when $\theta$
does, and $\sigma$ is defined on such plays; coherence and responsiveness of
$\sigma_i$ follow from those of $\sigma$ under the relocation, which is a
bijection on the relevant nodes. Since $\sigma$ is a winning strategy for $r$ from $P$ in $\arena{A}$, every maximal play in which Proponent follows $\sigma_i$ contains the P-answer P!$q_i$; hence $\sigma_i$ is winning.
By construction, every maximal obstinate play for $q_i$ from $P$ in
$\arena{A} \cup \arena{B}_i$ in which Proponent follows $\sigma_i$ embeds as a
proper subsequence of a maximal obstinate play for $r$ from $P$ in $\arena{A}$
in which Proponent follows $\sigma$; hence its length is strictly smaller than
$\ell$, and the induction hypothesis yields $P \deriv{\tobase{\arena{A} \cup \arena{B}_i}} q_i$.

By the conversion between bases and arenas,
$\tobase{\arena{A} \cup \arena{B}_i}
     = \tobase{\arena{A}} \cup \tobase{\arena{B}_i}
     = \tobase{\arena{A}} \cup \{\, \tobase{\arena{B}_i} \brule q_i \,\}$.
Now $\tobase{\arena{B}_i} \brule q_i$ is precisely the $i$-th premiss pair of
the rule $\rho$, so writing $Q_i := \tobase{\arena{B}_i}$ for the rules
discharged by that premiss we have
$P \deriv{\tobase{\arena{A}} \cup Q_i} q_i$ for each $i = 1, \ldots, n$.
Since $\rho$ is a rule of $\tobase{\arena{A}}$, clause (App) of
Definition~\ref{def:derivability-in-a-base} yields
$P \deriv{\tobase{\arena{A}}} r$, as required.
\end{proof}
 
Combining the two theorems with Lemma~\ref{lemma:inverse}, we obtain the
adequacy of the game interpretation.
\begin{corollary}[Adequacy]
\label{cor:adequacy}
Let $S$ be a finite set of atoms and let $p$ be an atom. For every base $\base{B}$,
$S \deriv{\base{B}} p$ iff $S \wstrat{\toarena{\base{B}}} p$
and, equivalently, for every arena $\arena{A}$,
$S \wstrat{\arena{A}} p$ iff $S \deriv{\tobase{\arena{A}}} p$.
\end{corollary}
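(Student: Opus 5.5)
The corollary follows by combining Theorem~\ref{thm:game-soundness}, Theorem~\ref{thm:strategy-derivability} and Lemma~\ref{lemma:inverse}; no further induction over plays or derivations is needed. I would prove the two biconditionals separately, each by establishing its two directions. For both, the only delicate point is the bookkeeping needed to move between the forms $\toarena{\base{B}}$ and $\tobase{\arena{A}}$ without applying either theorem to the wrong object.

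\emph{First biconditional.} Fix a base $\base{B}$, a finite set of atoms $S$ and an atom $p$. The left-to-right direction is exactly Theorem~\ref{thm:game-soundness}. For the converse, suppose $S \wstrat{\toarena{\base{B}}} p$. Apply Theorem~\ref{thm:strategy-derivability} with the arena $\arena{A} := \toarena{\base{B}}$ to obtain $S \deriv{\tobase{\toarena{\base{B}}}} p$. Lemma~\ref{lemma:inverse} gives $\tobase{\toarena{\base{B}}} = \base{B}$, hence $S \deriv{\base{B}} p$.

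\emph{Second biconditional.} Fix an arena $\arena{A}$. The left-to-right direction is Theorem~\ref{thm:strategy-derivability}. For the converse, suppose $S \deriv{\tobase{\arena{A}}} p$. Apply Theorem~\ref{thm:game-soundness} with the base $\base{B} := \tobase{\arena{A}}$ to obtain $S \wstrat{\toarena{\tobase{\arena{A}}}} p$. Lemma~\ref{lemma:inverse} rewrites the arena as $\arena{A}$. The word ``equivalently'' in the statement is also justified by the lemma: the substitution $\arena{A} = \toarena{\base{B}}$ turns each biconditional into the other.

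The one step that needs care is the appeal to Lemma~\ref{lemma:inverse}. The rewriting is valid only because the lemma asserts \emph{literal} equality of bases and arenas, not merely isomorphism. I would also check that both theorems are stated for arbitrary (possibly infinite) bases and arenas and for finite $S$, matching the hypotheses of the corollary, so that they can be instantiated at $\toarena{\base{B}}$ and $\tobase{\arena{A}}$.
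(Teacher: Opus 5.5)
Your proposal is correct and is the paper's own argument: the paper obtains the corollary in one line by combining Theorem~\ref{thm:game-soundness} and Theorem~\ref{thm:strategy-derivability} with Lemma~\ref{lemma:inverse}. You simply make the bookkeeping explicit, using the lemma to rewrite $\tobase{\toarena{\base{B}}}$ as $\base{B}$ and $\toarena{\tobase{\arena{A}}}$ as $\arena{A}$ in the two converse directions.
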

\begin{remark}[Faithfulness]
\label{rem:faithfulness}
Corollary~\ref{cor:adequacy} is extensional: it asserts the coincidence of
two relations, not a correspondence between their witnesses. The proofs,
however, are constructive and compositional. The strategy exhibited in the
proof of Theorem~\ref{thm:game-soundness} is defined by recursion on the
derivation: each application of (App) becomes a root question together with
sub-strategies meeting Opponent's challenges to the premisses of the rule
invoked, and each appeal to (Ref) becomes an immediate answer from the
initial assumptions. Conversely, the proof of
Theorem~\ref{thm:strategy-derivability} reads a derivation off a winning
strategy. In this sense, plays track the construction of proofs in a base.
Elevating this observation to a full-and-faithful correspondence ---
fullness: every winning strategy is the interpretation of some derivation;
faithfulness: distinct derivations, up to a suitable notion of identity, are
interpreted by distinct strategies --- would give an atomic analogue of the
full completeness theorems of game
semantics~\cite{AbramskyJagadeesan1994,HYLAND2000285}. Doing so requires
fixing notions of identity for derivations and for strategies, and we leave
it to future work; see Section~\ref{sec:discussion}. \fillBox
\end{remark}
 
Corollary~\ref{cor:adequacy} discharges the dictionary with which this
section opened: bases and arenas are the same structures and, at the level
of consequence, derivability and the existence of winning strategies
coincide. In particular, provability-in-a-base may be replaced by the
existence of a winning strategy in the corresponding arena without loss.
Section~\ref{sec:metatheory} makes exactly this replacement throughout Sandqvist's
clauses, yielding the game-extension semantics for $\IPL$.
\section{Logical Metatheory}
\label{sec:metatheory}
We have thus far established that, at the atomic level, the
proof-theoretic and the game-theoretic readings based on proof-search coincide: derivability in a
base is exactly the existence of a winning strategy in the corresponding arena
(Corollary~\ref{cor:adequacy}). The correspondence is, so far, limited to \emph{atomic} formulae. The
question of this section is whether the games extend from atoms to formulae:
What is it for Proponent to \emph{win} a formula $\phi$ in an arena $\arena{A}$, and
does the resulting notion of validity characterize $\IPL$?

There are some intuitive choices of definitions. For example, Proponent wins a conjunction $\phi \land \psi$ in an arena $\arena{A}$ just
in case she wins each conjunct $\phi$ and $\psi$ in $\arena{A}$. This captures the intensional characterization of a
conjunction as two separate obligations. Similarly, Proponent
wins an implication $\phi \to \psi$ in $\arena{A}$ just in case in any arena
$\arena{B}$ extending $\arena{A}$ in which she wins $\phi$, she also wins $\psi$. These heuristic accounts are unproblematic.
 
Disjunction requires more care, and it is here that the proof-search motivation
does real work. A na\"ive approach would be that Proponent wins the disjunction $\phi \lor \psi$ in $\arena{A}$
just in case she wins either $\phi$ or $\psi$ in $\arena{A}$.
This condition is sufficient: if Proponent wins a disjunct, then she wins the disjunction. But it is not necessary: that Proponent wins a
disjunction does not yield a winning strategy for either disjunct. The lottery example by Gheorghiu and Gu~\cite{Gheorghiu2026KnowledgeBases} makes this clear: in playing a lottery one knows that \emph{some} ticket will win, but does not know \emph{which} ticket will win. So what is the appropriate treatment for disjunction?
 
The
proof-search reading helps motivate an answer. A winning strategy
is a policy for \emph{continuing the search for a proof}; accordingly, to hold a
disjunction is not to hold a disjunct but to know how every search that could
be completed from $\phi$, and could be completed from $\psi$, may be
completed outright. That is, the disjunction is explicated by the
\emph{continuations} it serves: whatever (atomic) goal $p$ is winnable from
$\phi$ and winnable from $\psi$ is winnable simpliciter. This is a
continuation-passing-style reading of $\lor$ --- compare the interpretation
of classical disjunction via the $\lambda\mu\nu$-calculus and continuation
models~\cite{PYM2001315,Pym_ReductiveLogic}. This is precisely the game-theoretic
counterpart of Sandqvist's second-order clause. Indeed, the reading can be
made exact: Gheorghiu~\cite{Gheorghiu2026SupportIsSearch} shows that support
in a fixed base coincides with proof-search in a second-order hereditary
Harrop logic program under a CPS encoding of the connectives, with
$\phi \lor \psi$ encoded as
$\forall X\,((\phi \to X) \to (\psi \to X) \to X)$. The quantification over
extensions $\arena{B} \supseteq \arena{A}$ reflects that search may enlarge
the arena --- Proponent must be able to continue however the available rules
grow --- and the restriction to atomic continuations keeps the clause
pre-logical.
\begin{definition}[Support in an Arena]
\label{def:support_in_arena}
Let $\arena{A}$ be an arena. The \emph{support} relation
$\arenasupp{\arena{A}}$, relating finite (possibly empty) sets of formulae to
formulae, is defined inductively by the clauses of
Figure~\ref{fig:arena-support}, in which, in (Inf), $\Delta \neq \emptyset$. We say that
$\Gamma$ \emph{supports} $\phi$, written $\Gamma \arenasupp{} \phi$, iff
$\Gamma \arenasupp{\arena{A}} \phi$, for every arena $\arena{A}$. \fillBox
\end{definition}
\begin{figure}[t]
    \hrule
    \begin{align}
        &\arenasupp{\arena{A}} p
            &&\text{iff} \qquad\wstrat{\arena{A}} p
            \tag{At} \\
        &\arenasupp{\arena{A}} \phi \to \psi
            &&\text{iff} \qquad\phi \arenasupp{\arena{A}} \psi
            \tag{$\to$} \\
        &\arenasupp{\arena{A}} \phi \land \psi
            &&\text{iff} \qquad \arenasupp{\arena{A}} \phi \text{ and } \arenasupp{\arena{A}} \psi
            \tag{$\land$} \\
        &\arenasupp{\arena{A}} \bot
            &&\text{iff} \qquad\arenasupp{\arena{A}} p \text{, for every atom } p \in \At
            \tag{$\bot$} \\
        &\arenasupp{\arena{A}} \phi \lor \psi
            &&\text{iff} \qquad\text{for all } \arena{B} \supseteq \arena{A} \text{ and every atom } p \in \At \text{,} \notag \\
            & && \qquad \qquad\text{if } \phi \arenasupp{\arena{B}} p \text{ and } \psi \arenasupp{\arena{B}} p \text{, then } \arenasupp{\arena{B}} p
            \tag{$\lor$} \\
        &\Delta \arenasupp{\arena{A}} \phi
            &&\text{iff} \qquad \text{for all } \arena{B} \supseteq \arena{A} \text{, if } \arenasupp{\arena{B}} \psi \text{ for all } \psi \in \Delta \text{, then } \arenasupp{\arena{B}} \phi
            \tag{Inf}
    \end{align} 
    \caption{Support in an Arena}
    \hrule
    \label{fig:arena-support}
\end{figure}
 
Since the clauses of Figure~\ref{fig:arena-support} are those of
Figure~\ref{fig:sandqvist-ipl} with provability-in-a-base replaced, at the
base case, by the existence of a winning strategy, the results of
Section~\ref{sec:arenas-and-bases} transfer the two relations into one
another. We call this semantics  \emph{game-extension
semantics} (G-eS).
\begin{corollary} \label{cor:supp_equivalence}
Let $\Gamma$ be a finite set of formulae, let $\phi$ be a formula, let $\arena{A}$ be an arena, and let $\base{B}$ be a base. Then
\[
\mbox{$\Gamma \arenasupp{\arena{A}} \phi$ \quad iff \quad $\Gamma \supp{\tobase{\arena{A}}} \phi$,}
\]
and
\[
\mbox{$\Gamma \arenasupp{\toarena{\base{B}}} \phi$ \quad iff \quad $\Gamma \supp{\base{B}} \phi$.}
\]
\end{corollary}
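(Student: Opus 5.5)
The first equivalence will be proved by induction on the complexity of formulae, holding the arena universally quantified in the induction hypothesis; the second will then follow from the first. For the first, I fix an arbitrary arena $\arena{A}$ and show, simultaneously for all arenas, that $\arenasupp{\arena{A}} \phi$ iff $\supp{\tobase{\arena{A}}} \phi$. I then extend this to sequents $\Gamma \arenasupp{\arena{A}} \phi$ with $\Gamma \neq \emptyset$ by means of the (Inf) clause.

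The base case (At) is exactly Corollary~\ref{cor:adequacy} with $S = \emptyset$: $\wstrat{\arena{A}} p$ iff $\deriv{\tobase{\arena{A}}} p$. The cases for $\land$ and $\bot$ follow immediately from the induction hypothesis at the same arena. The case for $\to$ reduces to the (Inf) clause with $\Delta = \{\phi\}$, so it is handled together with the sequent case.

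The clauses that quantify over extensions, namely $\lor$ and (Inf), need one additional ingredient: extensions of arenas must correspond exactly to extensions of bases. Since $\tobase{-}$ is defined elementwise, $\arena{B} \supseteq \arena{A}$ implies $\tobase{\arena{B}} \supseteq \tobase{\arena{A}}$. Conversely, if $\base{C} \supseteq \tobase{\arena{A}}$, then $\toarena{\base{C}} \supseteq \toarena{\tobase{\arena{A}}} = \arena{A}$ by Lemma~\ref{lemma:inverse}, and $\tobase{\toarena{\base{C}}} = \base{C}$, again by Lemma~\ref{lemma:inverse}. Hence $\arena{B} \mapsto \tobase{\arena{B}}$ is a bijection from the arenas extending $\arena{A}$ onto the bases extending $\tobase{\arena{A}}$.

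With this bijection, each quantifier ``for all $\arena{B} \supseteq \arena{A}$'' can be rewritten as ``for all $\base{C} \supseteq \tobase{\arena{A}}$''. The inner conditions are then converted using the induction hypothesis at the extended arena $\arena{B}$. This is why the hypothesis must be stated for all arenas and not only for $\arena{A}$.

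In the $\lor$ case, the inner conditions $\phi \arenasupp{\arena{B}} p$ and $\psi \arenasupp{\arena{B}} p$ are sequents with strictly smaller formulae on both sides, so the hypothesis must be formulated for sequents. I therefore induct on the sum of the complexities of the formulae in $\Gamma \cup \{\phi\}$. For (Inf), the conditions $\arenasupp{\arena{B}} \psi$ for $\psi \in \Delta$ and $\arenasupp{\arena{B}} \phi$ involve strictly smaller measures, so the ordering is well-founded.

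The second equivalence is obtained by applying the first to the arena $\toarena{\base{B}}$ and using $\tobase{\toarena{\base{B}}} = \base{B}$ from Lemma~\ref{lemma:inverse}.

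I expect the main obstacle to be bookkeeping rather than conceptual. The essential point is establishing the bijection between extensions, which rests entirely on the elementwise definition of the translations and on Lemma~\ref{lemma:inverse}. Beyond that, the induction measure must be chosen so that the $\lor$ clause, whose inner conditions are sequents, and the (Inf) clause are both covered.
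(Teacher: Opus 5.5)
Your proposal is correct and takes the paper's approach, made explicit: induction on formulae with Corollary~\ref{cor:adequacy} as the atomic case, Lemma~\ref{lemma:inverse} matching the extensions $\arena{B} \supseteq \arena{A}$ with the extensions of $\tobase{\arena{A}}$ (which is why the hypothesis must range over all arenas), and the second equivalence obtained by instantiating the first at $\toarena{\base{B}}$. The one slip is your measure: summing complexities over the set $\Gamma \cup \{\phi\}$ does not strictly decrease when (Inf) unfolds $\phi \arenasupp{\arena{A}} \phi$ to $\arenasupp{\arena{B}} \phi$, so either induct on $\phi$ for empty-context support alone (unfolding (Inf) inside the $\to$ and $\lor$ cases and deriving the sequent case afterwards), or order measures lexicographically with empty contexts below non-empty ones.
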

\begin{proof}
    By induction on $\phi$. The base case is
    Corollary~\ref{cor:adequacy}; the inductive cases follow from the
    clause-by-clause match of Figure~\ref{fig:arena-support} with
    Figure~\ref{fig:sandqvist-ipl}, together with Lemma~\ref{lemma:inverse},
    which carries the quantification over extensions across the translations.
\end{proof}
 
Combining Corollary~\ref{cor:supp_equivalence} with
Theorem~\ref{thm:sandqvist-soundness-completeness} yields the main logical
result:
\begin{corollary} \label{cor:NJ_GeS_equivalence}
    Let $\Gamma$ be finite. Then $\Gamma \vdash_{\NJ} \phi$ iff
    $\Gamma \arenasupp{} \phi$. \fillBox
\end{corollary}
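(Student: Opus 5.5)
The plan is to derive Corollary~\ref{cor:NJ_GeS_equivalence} by chaining Theorem~\ref{thm:sandqvist-soundness-completeness} with Corollary~\ref{cor:supp_equivalence}, using Lemma~\ref{lemma:inverse} to move the quantifier ``for every arena'' across to ``for every base''. By Theorem~\ref{thm:sandqvist-soundness-completeness}, $\Gamma \vdash_{\NJ} \phi$ holds iff $\Gamma \supp{\base{B}} \phi$ for every base $\base{B}$. So it suffices to show that this holds iff $\Gamma \arenasupp{\arena{A}} \phi$ for every arena $\arena{A}$, which is the definition of $\Gamma \arenasupp{} \phi$ (Definition~\ref{def:support_in_arena}).

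For the left-to-right direction, assume $\Gamma \supp{\base{B}} \phi$ for every base $\base{B}$, and let $\arena{A}$ be an arbitrary arena. Then $\tobase{\arena{A}}$ is a base, so $\Gamma \supp{\tobase{\arena{A}}} \phi$. The first equivalence of Corollary~\ref{cor:supp_equivalence} then gives $\Gamma \arenasupp{\arena{A}} \phi$.

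For the right-to-left direction, assume $\Gamma \arenasupp{\arena{A}} \phi$ for every arena $\arena{A}$, and let $\base{B}$ be an arbitrary base. Instantiating at $\arena{A} := \toarena{\base{B}}$ gives $\Gamma \arenasupp{\toarena{\base{B}}} \phi$. The second equivalence of Corollary~\ref{cor:supp_equivalence} then yields $\Gamma \supp{\base{B}} \phi$.

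Combining the two directions with Theorem~\ref{thm:sandqvist-soundness-completeness} closes the chain. Finiteness of $\Gamma$ is needed only because Theorem~\ref{thm:sandqvist-soundness-completeness} and Corollary~\ref{cor:supp_equivalence} assume it.

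I expect the only real obstacle to lie in the two ingredients, not in this assembly. First, Corollary~\ref{cor:supp_equivalence} must hold uniformly for all arenas and all bases, including the quantification over extensions in the clauses ($\lor$) and (Inf). This needs $\toarena{-}$ and $\tobase{-}$ to preserve and reflect extension, i.e.\ $\arena{B} \supseteq \arena{A}$ iff $\tobase{\arena{B}} \supseteq \tobase{\arena{A}}$. Since both maps act elementwise and are mutually inverse (Lemma~\ref{lemma:inverse}), they induce an order-isomorphism between arenas and bases, so each extension of one corresponds bijectively to an extension of the other. If I had to verify one point, I would check this order-isomorphism first.

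Second, Sandqvist's theorem quantifies over arbitrary bases, whereas arenas correspond to bases of arbitrary finite level. Because $\tobase{-}$ is onto all bases, no restriction such as Remark~\ref{rem:restriction} is needed for this step.
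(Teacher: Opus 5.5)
Your proposal is correct and takes the paper's own route. The paper derives Corollary~\ref{cor:NJ_GeS_equivalence} by combining Corollary~\ref{cor:supp_equivalence} with Theorem~\ref{thm:sandqvist-soundness-completeness}; you have simply made explicit the instantiations at $\tobase{\arena{A}}$ and $\toarena{\base{B}}$ that carry the quantifier over arenas across to the quantifier over bases.
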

 
Although Corollary~\ref{cor:NJ_GeS_equivalence} follows by transfer, both
soundness and completeness admit direct proofs, obtained by adapting
Sandqvist's arguments~\cite{Sandqvist_Tor} to the game-theoretic setting;
these are given in Appendix~\ref{app:metatheory} and remain constructive and
elementary. Soundness is standard: each rule of NJ preserves support in an
arena. Completeness proceeds by constructing a \emph{simulation arena}
$\sarena$ in which each subformula $\xi$ of the formulae in question is
represented by a fresh atom $\flatset{\xi}$, and support and proof-search are
simulated simultaneously. It is worth pausing on the trees that $\sarena$
allots to a disjunction, for they exhibit the continuation-passing reading
concretely: alongside the trees encoding the introductions,
$\atree{\flatset{\phi}}{\flatset{(\phi \lor \psi)}}$ and
$\atree{\flatset{\psi}}{\flatset{(\phi \lor \psi)}}$, the arena contains, for
each atomic continuation $\flatset{\chi}$, the tree
\begin{center}
\small
    \begin{forest}
        [$\chi^\flat$[$(\phi \lor \psi)^\flat$][$\chi^\flat$[$\phi^\flat$]][$\chi^\flat$[$\psi^\flat$]]]
    \end{forest}
\end{center}
 
Read game-theoretically: Proponent answers a question at $\flatset{\chi}$ by invoking the disjunction together with two sub-searches, one continuing from
$\flatset{\phi}$ and one from $\flatset{\psi}$. A winning strategy for a disjunction is thus used exactly as the clause ($\lor$) says it may be: not by producing a disjunct, but by supplying, for either eventuality, a continuation of the search \cite{GPRR-ContCompPtS2026}. That the completeness argument goes through with this clause --- and fails for the na\"ive one (cf.~\cite{Piecha2016completeness,
Piecha2019incompleteness,GPRR-ContCompPtS2026}) --- is the technical
expression of the same point.
 
We close by comparing this treatment of disjunction with that of the Pym-Ritter games from which our setup departs~\cite{Pym_ReductiveLogic}.
There, the arena for a formula containing $\psi_1 \lor \psi_2$ contains a switching vertex: Opponent's demand for the disjunction is met by Proponent playing $L$ or $R$, whereupon Opponent demands a proof of the chosen disjunct. This is the introduction rule read reductively, and it is apt when Proponent is \emph{constructing} a proof of the disjunction; but it exacts a commitment that proof-search cannot in general afford --- choose the wrong disjunct and the search fails although the goal is provable. In the Pym-Ritter framework, the remedy is a matter of \emph{control}: backtracking, modelled by embedding the intuitionistic games in classical ones, where Proponent may return to the switch and play the other move. Our clause internalizes that remedy semantically. Rather than committing to a disjunct and reserving the right to backtrack, a winning strategy for $\phi \lor \psi$ carries continuations for both disjuncts, so no commitment is ever made that the search might need to retract. Where the switching reading gives disjunction the type of a choice, the present reading gives it the type of its eliminations --- exactly the shift effected by continuation-passing-style translations, under which the control operators of classical search become intuitionistically definable~\cite{Pym_ReductiveLogic}. Sandqvist's clause
for $\lor$, which within B-eS can appear a technical device required for completeness, thus acquires a direct computational content in G-eS: it is the type of a strategy that knows how to continue the search, however the disjunction is eventually resolved \cite{GPRR-ContCompPtS2026}.

\section{Discussion} 
\label{sec:discussion}

The main contribution of this work is  the establishment of a direct correspondence between game-theoretic and proof-theoretic notions. Lemma \ref{lemma:inverse} identifies bases with arenas at the level of data and Corollary \ref{cor:adequacy} identifies, at the level of consequence, winning strategies in arenas and derivability in bases. Corollary \ref{cor:supp_equivalence} equates validity in bases with validity in arenas and Corollary \ref{cor:NJ_GeS_equivalence} demonstrates that the semantics developed in this paper is, in fact, a semantics for $\rm IPL$. In this sense, the semantics presented in this paper follows the inferentialist project: we have shown that concepts of game semantics can be reconstructed from the primitives of P-tS. 

Gheorghiu \cite{Gheorghiu2026SupportIsSearch},  showed that support in a  base coincides with uniform proof-search in a second-order hereditary Harrop program. As a corollary of the correspondences established here, this  result acquires an operational reading: validity in an arena is not merely characterized by winning strategies, but is executable, with strategies computed by goal-directed search against the arena's base.
     
The focus of this work is the development of a game semantics rooted in proof-theoretical concepts. Future directions might include an extension of the semantics presented in this paper to the cases corresponding to first- and higher-order predicate logic (cf.~\cite{Gheorghiu2025-FirstOrder,GP2025-P-tS-S-oL}). 
Connections with categorical accounts of B-eS (e.g., \cite{PymRitterRobinson2024}) and proof-relevant semantics should also be considered.  

One might also explore different notions of play and how they give rise to different logics. Specifically, how can substructural reasoning (cf. \cite{GheorghiuGuPym2023IMLL,gheorghiu_pts_bi_2025}) be captured by constraining the definition of play: contraction, for example, can be restricted by limiting the number of times players can move on the same node. Also, relevance can be imposed by a condition that forces the existence of a question relative to every vertex in an arena. 

One might also consider how  classical reasoning might be captured using the inferentialist game semantics apparatus. At minimum, classical logic can be obtained from the current game-extension semantics if the depth of trees in arenas is restricted to trees with maximum depth two and the disjunction clause is omitted (cf.~\cite{Gheorghiu2025-FirstOrder,Sandqvist2005inferentialist}). This paper offers an alternative way following \cite{Pym_ReductiveLogic}: to relax clause (ix) of Definition~\ref{def:play} so that for any P-answer $m_i$ there exists an O-question $m_k$ and an O-answer $m_j$ such that $m_i$ is hereditarily justified by $m_k$ and $m_j$ is an O-answer with the same label as $m_k$ and $k<j<i$. Another approach would be to develop concurrent games, as in \cite{alcolei_et_al:LIPIcs.CSL.2018.5}, to capture classical reasoning.   
    




\bibliography{references}

\appendix

\section{Proofs for Section~\ref{sec:games}}
\label{app:games}

\begin{proof}[Proof of Lemma~\ref{lem:monotonicity} (Monotonicity)]
Suppose $Q \wstrat{\arena{A}} r$ and $\arena{B} \supseteq \arena{A}$. There are
two cases.

\medskip

\noindent\emph{Case $r \in Q$.} Since $r \in Q$, the atom $r$ is among the
initial assumptions of any play for $r$ from $Q$. Hence Proponent may answer
the initial question immediately, and
\[
\text{O?}r,\ \text{O!}q_1,\ \ldots,\ \text{O!}q_n,\ \text{P!}r
\qquad (q_1, \ldots, q_n \in Q)
\]
is a winning strategy in any arena; in particular, $Q \wstrat{\arena{B}} r$.

\medskip

\noindent\emph{Case $r \notin Q$.} By clause~(x) of
Definition~\ref{def:play}, there is a tree $\Psi \in \arena{A}$ whose root
vertex is labelled $r$; let $\Psi^1, \ldots, \Psi^n$ be its immediate
subtrees:
\begin{center}
    \begin{forest}
        [$r$[$\Psi^1$][...][$\Psi^n$]]
    \end{forest}
\end{center}
By (winning), a winning strategy witnessing
$Q \wstrat{\arena{A}} r$ begins with the initial question O?$r$ and the
initial assumptions O!$q_1$, \ldots, O!$q_n$ for $q_1, \ldots, q_n \in Q$. For
Proponent to answer the initial question, there must be an O-answer at the
root of $\Psi$ and hence, by clause~(vi) of Definition~\ref{def:play}, a
P-question at that node. Since $\arena{B} \supseteq \arena{A}$, we have
$\Psi \in \arena{B}$; and by Definition~\ref{def:atomic-arena} the root of
$\Psi$ is a P-node in $\arena{B}$, so Proponent may question it in
$\arena{B}$ exactly as in $\arena{A}$, whatever further trees $\arena{B}$
contains. Hence the strategy that answers the initial question O?$r$ from the
initial assumptions O!$q_1$, \ldots, O!$q_n$ in $\arena{A}$ answers it from
the same assumptions in $\arena{B}$; that is, $Q \wstrat{\arena{B}} r$.
\end{proof}

\section{Proofs for Section~\ref{sec:arenas-and-bases}}
\label{app:sec-arenas-and-bases}

\begin{proof}[Proof of Lemma~\ref{lemma:inverse}]
We prove two claims: first, that $\toarena{\tobase{\Psi}} = \Psi$ for every
atomic tree $\Psi$, by induction on the depth of $\Psi$; second, that
$\tobase{\toarena{\Gamma}} = \Gamma$ for every base rule $\Gamma$, by
induction on the level of $\Gamma$.

\medskip

\noindent\emph{First claim.}

\smallskip

\noindent\emph{Base case.} Suppose $\Psi = (\atree{q_1, \ldots, q_n}{p})$. By
Definition~\ref{def:arena-rule-function},
$\tobase{\Psi} = (q_1, \ldots, q_n \brule p)$, and, again by
Definition~\ref{def:arena-rule-function},
$\toarena{(q_1, \ldots, q_n \brule p)} = (\atree{q_1, \ldots, q_n}{p}) = \Psi$.

\smallskip

\noindent\emph{Inductive step.} Suppose
$\Psi = (\atree{\Theta_1, \ldots, \Theta_n}{p})$, where each $\Theta_i$ (with
$1 \leq i \leq n$) is an immediate subtree of $p$, and assume
$\toarena{\tobase{\Theta_i}} = \Theta_i$ for every $\Theta_i$. By
Definition~\ref{def:arena-rule-function}, applied twice,
\[
\begin{array}{rcl}
\toarena{\tobase{(\atree{\Theta_1, \ldots, \Theta_n}{p})}}
    & = & \toarena{(\tobase{\Theta_1}, \ldots, \tobase{\Theta_n} \brule p)} \\
    & = & (\atree{\toarena{\tobase{\Theta_1}}, \ldots , 
          \toarena{\tobase{\Theta_n}}}{p})
\end{array}
\]
By the induction hypothesis, the right-hand side is
$(\atree{\Theta_1, \ldots, \Theta_n}{p}) = \Psi$.

\medskip

\noindent\emph{Second claim.}

\smallskip

\noindent\emph{Base case.} Suppose $\Gamma = (q_1, \ldots, q_n \brule p)$. By
Definition~\ref{def:arena-rule-function},
$\toarena{\Gamma} = (\atree{q_1, \ldots, q_n}{p})$, and, again by
Definition~\ref{def:arena-rule-function},
$\tobase{(\atree{q_1, \ldots, q_n}{p})} = (q_1, \ldots, q_n \brule p) = \Gamma$.

\smallskip

\noindent\emph{Inductive step.} Suppose
$\Gamma = (\Delta_1, \ldots, \Delta_n \brule p)$, where each $\Delta_i$ (with
$1 \leq i \leq n$) is a base rule of lower level, and assume
$\tobase{\toarena{\Delta_i}} = \Delta_i$ for every $\Delta_i$. By
Definition~\ref{def:arena-rule-function}, applied twice,
\[
\begin{array}{rcl}
\tobase{\toarena{(\Delta_1, \ldots, \Delta_n \brule p)}}
    & = & \tobase{(\atree{\toarena{\Delta_1}, \ldots, \toarena{\Delta_n}}{p})} \\ 
    & = & (\tobase{\toarena{\Delta_1}}, \ldots , \tobase{\toarena{\Delta_n}} \brule p) 
\end{array}
\]
By the induction hypothesis, the right-hand side is
$(\Delta_1, \ldots, \Delta_n \brule p) = \Gamma$.
\end{proof}

\section{Proofs for Section~\ref{sec:metatheory}}
\label{app:metatheory}


That $\IPL$ is sound with respect to the semantics is shown in an entirely standard way. One shows that support in an arena is preserved by the rules of the system NJ for IPL \cite{Gentzen1935,Prawitz1965}, as given in Figure~\ref{fig:nj}. 

\begin{figure}[ht]
\hrule 
\[
\begin{array}{c@{\quad\quad\quad\quad}c}
    & \infer[\bot{E}]{\Gamma \vdash \phi}{\Gamma \vdash \bot} \\ [4pt] 
\infer[\wedge{I}]{\Gamma \vdash \phi \wedge \psi}{\Gamma \vdash \phi & \Gamma \vdash \psi}
    &  \infer[\wedge{E}]{\Gamma \vdash \phi}{\Gamma \vdash \phi \wedge \psi} \qquad 
            \infer[\wedge{E}]{\Gamma \vdash \psi}{\Gamma \vdash \phi \wedge \psi} \\ [4pt] 
\infer[\vee{I}]{\Gamma \vdash \phi \vee \psi}{\Gamma \vdash \phi} \qquad 
            \infer[\vee{I}]{\Gamma \vdash \phi \vee \psi}{\Gamma \vdash \psi}     
    &  \infer[\vee{E}]{\Gamma \vdash \chi}
            {\Gamma \vdash \phi \vee \psi & \Gamma, \phi \vdash \chi &
              \Gamma, \psi \vdash \chi} \\ [4pt]
\infer[\supset{I}]{\Gamma \vdash \phi \supset \psi}{\Gamma , \phi \vdash \psi}
    & \infer[\supset{E}]{\Gamma \vdash \psi}{\Gamma \vdash \phi & \Gamma \vdash \phi \supset \psi} \\ [4pt] 
\end{array}
\]
\caption{Natural Deduction System $\NJ$ for IPL (eliding $\top$)}
\vspace{1mm}
\hrule 
\label{fig:nj}
\end{figure}

\begin{theorem}[Soundness] \label{thm:soundness}
    If $\Gamma \vdash_{\rm NJ} \phi$, 
    then $\Gamma \arenasupp{} \phi$. 
\end{theorem}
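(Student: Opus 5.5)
We argue by induction on the height of an NJ derivation of $\Gamma \vdash \phi$, showing that each rule of Figure~\ref{fig:nj} preserves the relation $\Gamma \arenasupp{\arena{A}} \phi$ for every arena $\arena{A}$. Since $\Gamma$ may be empty, we use the (Inf) clause for nonempty $\Gamma$ and read $\emptyset \arenasupp{\arena{A}} \phi$ as $\arenasupp{\arena{A}} \phi$. In the latter case, monotonicity in the arena is needed so that $\emptyset \arenasupp{\arena{A}} \phi$ behaves like (Inf) with no hypotheses.

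The first step is a monotonicity lemma for support: if $\arenasupp{\arena{A}} \phi$ and $\arena{B} \supseteq \arena{A}$, then $\arenasupp{\arena{B}} \phi$. We prove it by induction on $\phi$. The atomic case is Lemma~\ref{lem:monotonicity}, and $\land$ and $\bot$ follow from the induction hypothesis. For $\lor$ and for $\to$ (via (Inf)) the clauses already quantify over all extensions, and extension is transitive. With this lemma, $\Gamma \arenasupp{\arena{A}} \phi$ is equivalent to: for every $\arena{B} \supseteq \arena{A}$ supporting all of $\Gamma$, $\arena{B}$ supports $\phi$. So for every rule we may fix an arbitrary $\arena{B} \supseteq \arena{A}$ supporting $\Gamma$ and check the conclusion there.

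The axiom and the rules $\land I$, $\land E$, $\to I$, $\to E$ are then routine. For $\to I$, the hypothesis $\Gamma,\phi \arenasupp{\arena{A}} \psi$ gives $\phi \arenasupp{\arena{B}} \psi$ for every $\arena{B} \supseteq \arena{A}$ supporting $\Gamma$. For $\lor I$, suppose $\arenasupp{\arena{B}} \phi$; given $\arena{C} \supseteq \arena{B}$ and an atom $p$ with $\phi \arenasupp{\arena{C}} p$, monotonicity gives $\arenasupp{\arena{C}} \phi$, hence $\arenasupp{\arena{C}} p$. For $\bot E$ we need the auxiliary fact that if $\arenasupp{\arena{B}} p$ for all atoms $p$, then $\arenasupp{\arena{B}} \chi$ for every $\chi$. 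This is proved by induction on $\chi$, with $\lor$ and $\to$ handled via monotonicity.

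The main obstacle is $\lor E$, because the ($\lor$) clause only delivers atomic conclusions. Following Sandqvist, we first prove the lemma: if $\arenasupp{\arena{B}} \phi \lor \psi$, $\phi \arenasupp{\arena{B}} \chi$ and $\psi \arenasupp{\arena{B}} \chi$, then $\arenasupp{\arena{B}} \chi$, for arbitrary $\chi$, by induction on $\chi$.
\begin{itemize}
\item Atomic $\chi$: this is the clause ($\lor$) itself.
\item $\chi = \chi_1 \land \chi_2$: split into the two conjuncts.
\item $\chi = \bot$: reduce to all atoms.
\item $\chi = \chi_1 \to \chi_2$: pass to an extension $\arena{C}$ supporting $\chi_1$, where $\phi \arenasupp{\arena{C}} \chi_2$ and $\psi \arenasupp{\arena{C}} \chi_2$ hold by monotonicity, and apply the induction hypothesis to $\chi_2$.
\item $\chi = \chi_1 \lor \chi_2$: take $\arena{C} \supseteq \arena{B}$ and an atom $p$ with $\chi_1 \arenasupp{\arena{C}} p$ and $\chi_2 \arenasupp{\arena{C}} p$. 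Show $\phi \arenasupp{\arena{C}} p$: in any extension supporting $\phi$ we get $\chi_1 \lor \chi_2$, hence $p$. Argue likewise for $\psi$, then apply the atomic case.
\end{itemize}
With this lemma, $\lor E$ follows. In $\arena{B}$ supporting $\Gamma$ we have $\phi \lor \psi$, and the minor premisses give $\phi \arenasupp{\arena{B}} \chi$ and $\psi \arenasupp{\arena{B}} \chi$, using monotonicity to keep $\Gamma$ supported in further extensions.

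Alternatively, the whole theorem follows at once from Corollary~\ref{cor:supp_equivalence} and Theorem~\ref{thm:sandqvist-soundness-completeness}. The plan above gives the direct, game-theoretic proof.
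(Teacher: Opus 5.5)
Your proposal is correct and takes essentially the paper's approach: the paper only says that soundness is shown in the standard, Sandqvist-style way, by checking that each NJ rule preserves support in an arena. Your three lemmas — monotonicity of support (resting on Lemma~\ref{lem:monotonicity}), the $\bot$-explosion lemma, and the generalised $\lor$-elimination lemma proved by induction on $\chi$ — supply exactly the details that sketch leaves implicit, and your alternative via Corollary~\ref{cor:supp_equivalence} and Theorem~\ref{thm:sandqvist-soundness-completeness} is the transfer argument the paper itself gives in Section~\ref{sec:metatheory}.
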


The more interesting direction is completeness. It too is 
articulated through NJ. 
The proof of completeness is similar to that for the corresponding result in by Sandqvist \cite{Sandqvist_Tor}. We construct a `simulation' arena $\sarena$ by `flattening', $(-)^\flat$, which is defined in the proof of completeness, below. In this arena, we simultaneous simulate support and proof-search and show that they coincide. 

\begin{definition}[Simulation arena] \label{def:sim-arena}
We define the simulation arena $\sarena$ associated with a set of formulae $\Theta$. Let $\Xi$ be the set containing all elements  of $\Theta$ and their subformulae. Then associate with each element $\xi$ of $\Xi$ a distinct atomic counterpart $\xi^\flat$ such that

\begin{itemize}
\item $(\phi \land \psi)^\flat$:
\begin{center}
    \begin{forest}
        [\;$(\phi \land \psi)^\flat$
        [$\phi^\flat$]
        [$\psi^\flat$]]
    \end{forest}
        \hspace{0.5cm}
    \begin{forest}
        [$\phi^\flat$[$(\phi \land \psi)^\flat$]]
        \end{forest}
        \hspace{0.5cm}
        \begin{forest}
        [$\psi^\flat$[$(\phi \land \psi)^\flat$]]
    \end{forest}
\end{center}

\item $\phi \to \psi$

\begin{center}
    \begin{forest}   
        [$(\phi \to \psi)^\flat$
        [$\psi^\flat$[$\phi^\flat$]]]    
    \end{forest}
        \hspace{0.5cm}
    \begin{forest} 
        [$\psi^\flat$[$(\phi \to \psi)^\flat$][$\phi^\flat$]]
    \end{forest}
\end{center}

    \item $\phi \lor \psi$

\begin{center}
    \begin{forest}   
        [$\chi^\flat$[$(\phi \lor \psi)^\flat$][$\chi^\flat$[$\phi^\flat$]][$\chi^\flat$[$\psi^\flat$]]]
    \end{forest}
        \hspace{0.5cm}
    \begin{forest}
        [$(\phi \lor \psi)^\flat$[$\phi^\flat$]]
    \end{forest}
        \hspace{0.5cm}
    \begin{forest}
        [$(\phi \lor \psi)^\flat$[$\psi^\flat$]]
    \end{forest}
\end{center}

    \item $\bot$

\begin{center}
    \begin{forest}   
        [$\phi^\flat$[$\bot^\flat$]]
    \end{forest}
\end{center}
\end{itemize} 
This defines the simulation arena $\sarena$ associated with $\Theta$. \fillBox
\end{definition}
Atoms in $\sarena$ simulate the behaviour of the subformulae in $\phi$.

\begin{theorem}[Completeness] \label{thm:completeness} 
If $\Xi \arenasupp{} \zeta$, then $\Xi \vdash_{\rm NJ} \zeta$. 
\end{theorem}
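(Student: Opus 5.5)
We plan to adapt Sandqvist's completeness argument directly to arenas, using the simulation arena $\sarena$ of Definition~\ref{def:sim-arena}, where $\Theta := \Xi \cup \{\zeta\}$. Write $\flatset{\Xi} := \{\flatset{\xi} \mid \xi \in \Xi\}$, and let $(-)^\natural$ be the inverse substitution that replaces each $\flatset{\xi}$ by $\xi$ and fixes all other atoms. The argument has three steps:
\begin{enumerate}[label=(\arabic*)]
    \item a \emph{flattening lemma} relating support of formulae to support of their atomic counterparts;
    \item a transfer of the hypothesis $\Xi \arenasupp{} \zeta$ to the purely atomic statement $\flatset{\Xi} \arenasupp{\sarena} \flatset{\zeta}$;
    \item a \emph{sharpening lemma} that reads an NJ derivation off a winning strategy.
\end{enumerate}

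Step (1) is the flattening lemma: for every $\arena{B} \supseteq \sarena$ and every subformula $\xi$ of $\Theta$, $\arenasupp{\arena{B}} \flatset{\xi}$ iff $\arenasupp{\arena{B}} \xi$. We prove it by induction on $\xi$, using the trees of $\sarena$ exactly as Sandqvist uses his base rules. For atoms we take $\flatset{p} = p$.

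For $\land$, the tree $\atree{\flatset{\phi},\flatset{\psi}}{\flatset{(\phi\land\psi)}}$ gives the right-to-left direction. Concretely, Proponent plays the root question, and by Theorem~\ref{thm:game-soundness} and Corollary~\ref{cor:adequacy} it is enough to argue with derivability in $\tobase{\arena{B}}$. The two elimination trees give the converse.

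For $\to$, suppose $\phi \arenasupp{\arena{B}} \psi$. Extend $\arena{B}$ by the single-node tree $\flatset{\phi}$, apply the induction hypothesis, and then use the tree $\atree{(\atree{\flatset{\phi}}{\flatset{\psi}})}{\flatset{(\phi\to\psi)}}$, whose discharged hypothesis is exactly that single-node tree. Monotonicity (Lemma~\ref{lem:monotonicity}) is used throughout.

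For $\lor$, the tree with root $\flatset{\chi}$ realises the elimination clause only for atomic continuations $\flatset{\chi}$ with $\chi \in \Theta$. A general atom $p$, however, is not among these. We handle this as Sandqvist does: first show $\arenasupp{\arena{B}} \phi\lor\psi$ implies $\arenasupp{\arena{B}} \flatset{(\phi\lor\psi)}$ by taking $p := \flatset{(\phi\lor\psi)}$ in clause ($\lor$) together with the introduction trees. For the converse, take an arbitrary $p$ with $\phi \arenasupp{\arena{C}} p$ and $\psi \arenasupp{\arena{C}} p$. We argue by induction on the derivation in $\tobase{\arena{C}}$ of $\flatset{(\phi\lor\psi)}$: the last rule applied is either an introduction tree or the $\chi$-elimination tree, and this yields $\arenasupp{\arena{C}} p$. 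Here Corollary~\ref{cor:adequacy} lets us replace strategies by derivations.

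For $\bot$, the trees $\atree{\flatset{\bot}}{\flatset{\phi}}$ give the right-to-left direction. For the converse, $\arenasupp{\arena{B}} \bot$ yields $\flatset{\bot}$ because $\flatset{\bot}$ is itself an atom.

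Step (2) transfers the hypothesis. Assuming $\Xi \arenasupp{} \zeta$, we have $\Xi \arenasupp{\sarena} \zeta$, and by flattening together with clause (Inf) this becomes $\flatset{\Xi} \arenasupp{\sarena} \flatset{\zeta}$. This is an atomic statement over $\sarena$. Instantiating (Inf) at the extension of $\sarena$ by single-node trees for $\flatset{\Xi}$, and applying Corollary~\ref{cor:adequacy}, gives $\flatset{\Xi} \deriv{\tobase{\sarena}} \flatset{\zeta}$; equivalently, $\flatset{\Xi} \wstrat{\sarena} \flatset{\zeta}$.

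Step (3) is the sharpening lemma: if $S \deriv{\tobase{\arena{B}}} p$ for an extension $\arena{B}$ of $\sarena$ by atoms, then $\shrpset{S} \vdash_{\NJ} \shrp{p}$. We prove it by induction on the derivation (or equivalently on the length $\ell$ of obstinate plays, as in Theorem~\ref{thm:strategy-derivability}). Each tree of $\sarena$, once sharpened, is an instance of an NJ rule: $\land$I/E, $\to$I/E, $\lor$I/E with atomic conclusion $\chi$, and $\bot$E. Applying this with $S = \flatset{\Xi}$ and $p = \flatset{\zeta}$ gives $\Xi \vdash_{\NJ} \zeta$.

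I expect the main obstacle to be the $\lor$ case of the flattening lemma, specifically the passage from $\arenasupp{\arena{C}} \flatset{(\phi\lor\psi)}$ to arbitrary atomic continuations $p$. This needs an auxiliary induction over the derivation, and it must be stated for all extensions $\arena{C}$ so that the hypotheses discharged by the $\chi$-tree remain available. The second point requiring care is keeping the game-level statements aligned with the base-level ones via Corollary~\ref{cor:adequacy}.
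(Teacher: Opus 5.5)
Your plan re-proves Sandqvist's ($\dagger$) and ($\ddagger$) directly for arenas. The paper instead transfers them from $\sbase$ via $\tobase{\sarena}=\sbase$ and Corollary~\ref{cor:supp_equivalence}. That difference is harmless. The problem is a genuine gap in Step~(1), in the direction from $\arenasupp{\arena{B}}\flatset{\xi}$ to $\arenasupp{\arena{B}}\xi$, for $\xi=\phi\lor\psi$ and for $\xi=\bot$. If the elimination trees conclude only atoms $\flatset{\chi}$ with $\chi\in\Theta$, this direction is false, so no auxiliary induction can deliver it. Your induction on the derivation of $\flatset{(\phi\lor\psi)}$ in $\tobase{\arena{C}}$ breaks at the case split. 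Since $\arena{C}$ is an arbitrary extension, the last rule may be any extra tree with root $\flatset{(\phi\lor\psi)}$, e.g.\ a single-node tree. Even inside $\sarena$, the $\land$E, $\to$E and $\bot$ trees with that root are further cases you omit.

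Here is a concrete counterexample. Let $a,b$ be distinct atoms, $\Xi=\{a\lor b\}$, $\zeta=b\lor a$, $u:=\flatset{(a\lor b)}$, and $\arena{B}_0:=\sarena\cup\{u\}$. This is exactly the arena at which your Step~(2) instantiates (Inf), and $\arenasupp{\arena{B}_0}u$ holds. Take an atom $w$ not occurring in $\sarena$ and set $\arena{D}:=\arena{B}_0\cup\{\atree{a}{w},\atree{b}{w}\}$. Then $a\arenasupp{\arena{D}}w$ and $b\arenasupp{\arena{D}}w$, yet $w$ is not derivable in $\tobase{\arena{D}}$. The only rules concluding $a$ are $\lor$E instances, each of which needs $a$ to be derivable once $b$ is added as an axiom. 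There the same rules need the same thing again, so no finite derivation exists; the case of $b$ is symmetric. Hence $\arenasupp{\arena{B}_0}a\lor b$ fails, and Step~(2) cannot get from $\flatset{\Xi}$ back to $\Xi$. The $\bot$ case, which you treat as routine, fails in the same way. $\arenasupp{\arena{B}}\bot$ demands every atom of $\At$, but the trees $\atree{\flatset{\bot}}{\flatset{\phi}}$ only yield atoms $\flatset{\phi}$. For instance, $\sarena\cup\{\flatset{\bot}\}$ supports $\flatset{\bot}$ but not a fresh atom.

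The missing idea is that the continuations must range over all of $\At$, as in Sandqvist's own simulation base. Sandqvist does not recover an arbitrary $p$ by an induction; he builds the rules in, and the ($\dagger$) that the paper's transfer relies on depends on this. So include $\atree{\flatset{(\phi\lor\psi)},(\atree{\flatset{\phi}}{p}),(\atree{\flatset{\psi}}{p})}{p}$ and $\atree{\flatset{\bot}}{p}$ for every $p\in\At$; arenas may be infinite, so this is allowed. The $\lor$ case then needs no auxiliary induction:
\begin{enumerate}
\item From $\phi\arenasupp{\arena{C}}p$ and the induction hypothesis at $\arena{C}\cup\{\flatset{\phi}\}$, you get $\wstrat{\arena{C}\cup\{\flatset{\phi}\}}p$; likewise for $\psi$.
\item Monotonicity gives $\wstrat{\arena{C}}\flatset{(\phi\lor\psi)}$.
\item Corollary~\ref{cor:adequacy} and (App) on the $p$-rooted tree then give $\wstrat{\arena{C}}p$.
\end{enumerate}
The $\bot$ case goes the same way using $\atree{\flatset{\bot}}{p}$.

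Step~(3) survives, since these trees sharpen to $\lor$E and $\bot$E with conclusion $\shrp{p}$. You should, however, restate it: for derivations in $\tobase{\sarena}\cup T$, with $T$ a set of atoms, conclude $\shrpset{(S\cup T)}\vdash_{\NJ}\shrp{p}$. As written, with an extension by atoms but only $\shrpset{S}$ on the left, it is false. The generalised form is needed for the induction anyway, because the $\to$I and $\lor$E trees add discharged atoms to the base.
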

\begin{proof}
Assume $\Xi \arenasupp{} \zeta$. Let $\Gamma$ be the set containing all members of $\Xi \cup \{\zeta\}$ and their subsentences. With every non-atomic $\gamma \in \Gamma$ associate an atom $\gamma^\flat \notin \Gamma$ in such a way that $\gamma_{1}^\flat \neq \gamma^\flat_2$ whenever $\gamma_1 \neq \gamma_2$. Also for every atom $g \in \Gamma$, set $g^\flat = g$. Conversely, with every atom $p$ associate a sentence $\shrp{p}$ in such a way that $\shrp{(\gamma^\flat)} = \gamma$ for every $\gamma  \in \Gamma$; when $p$ in not in the range of $\flat$, set $\shrp{p} = p$. For any $\flatset{\Phi} = {\phi^\flat | \phi \in \Phi}$ and $\shrpset{P} = {\shrp{p} | p \in P}$.

To prove completeness, Sandqvist, In \cite{Sandqvist_Tor}, constructs a simulation base $\sbase$ (Figure \ref{fig:sbase}) with the following properties:

\begin{figure}[ht]
\hrule 
\[
\begin{array}{c@{\quad\quad\quad\quad}c}
    & \infer[\bot{E}]{\phi^\flat}{\bot^\flat} \\ [4pt] 
\infer[\wedge{I}]{(\phi \wedge \psi)^\flat}{\phi^\flat & \psi^\flat}
    &  \infer[\wedge{E}]{\phi^\flat}{(\phi \wedge \psi)^\flat} \qquad 
            \infer[\wedge{E}]{\psi^\flat}{(\phi \wedge \psi)^\flat} \\ [4pt] 
\infer[\vee{I}]{(\phi \vee \psi)^\flat}{\phi^\flat} \qquad 
            \infer[\vee{I}]{(\phi \vee \psi)^\flat}{\psi^\flat}     
    &  \infer[\vee{E}]{\chi^\flat}
            {(\phi \vee \psi)^\flat & \deduce{\chi^\flat}{[\phi^\flat]}    &
            \deduce{\chi^\flat}{[\psi^\flat]}} \\ [4pt]
\infer[\supset{I}]{(\phi \supset \psi)^\flat}
    {\deduce{\psi^\flat}{[\phi^\flat]}}
    & \infer[\supset{E}]{\psi^\flat}{\phi^\flat & (\phi \supset \psi)^\flat} \\ [4pt] 
\end{array}
\]
\caption{Sandqvist's simulation base $\sbase$ \cite{Sandqvist_Tor}}
\vspace{1mm}
\hrule 
\label{fig:sbase}
\end{figure}
\begin{itemize}
    \item[($\dagger$)] for every $\gamma \in \Gamma$ and every $\base{B} \supseteq \sbase$, $\supp{\base{B}}\gamma^\flat$ iff $\supp{\base{B}} \gamma$
    \item[($\ddagger$)] for any $P$ and $q$, if $P \supp{\sbase} q$, then $\shrp{P} \vdash_{\rm NJ} \shrp{q}$ 
\end{itemize}
 From these two properties, Sandqvist gets to the desired conclusion by the following argument: from the hypothesis that $\Xi \Vdash \zeta$ it follows that $\Xi^\flat \supp{\sbase} \zeta^\flat$ --- for if $\base{B \supseteq \sbase}$ and $\supp{\base{B}} \xi^\flat$ for every $\xi^\flat \in \Xi^\flat$, then by $(\dagger)$, $\supp{\base{B}} \xi$ for every $\xi \in \Xi$, whence $\supp{\base{B}} \zeta$ since $\Xi \Vdash \zeta$, whence $\supp{\base{B}} \zeta^\flat$ by $(\dagger)$ again; but,  then, because $T \supp{\base{B}} q$ iff $T \supp{\base{B}} q$ (Theorem 3.1 of \cite{Sandqvist_Tor}), it follows that $\Xi^\flat \supp{\sbase} \zeta^\flat$, whence by $(\ddagger)$, $\shrp{(\Xi^\flat)} \supp{\sbase} \shrp{(\zeta^\flat)}$, which is just to say that $\Xi \vdash_{\rm NJ} \zeta$, as desired.

Notice that $\tobase{\sarena}$ is exactly Sandqvist's simulation base $\sbase$ and, conversly, $\toarena{\sbase} = \sarena$. From this fact, toghether with corollary \ref{cor:adequacy} it follows that:
\[
S \supp{\sbase} p \quad \text{iff} \quad S \wstrat{\toarena{\sbase}} p,
\]
and, equivalently,
\[
S \wstrat{\sarena} p \quad \text{iff} \quad S \supp{\tobase{\sarena}} p.
\]
which is just to say,
\[
S \wstrat{\sarena} p \quad \text{iff} \quad S \supp{\sbase} p.
\]
Also, for every extension $\arena{B \supseteq N}$ if follows that $S \wstrat{\arena{B}} p$ iff $S \supp{\tobase{\arena{B}}} p$ and, conversely for every extension $\base{B \supseteq \sbase}$, $S \supp{\base{B}} p$ iff $S \wstrat{\toarena{\base{B}}} p$. Hence, by corollary \ref{cor:adequacy} together with Theorem \ref{cor:supp_equivalence} it follows that:
\begin{itemize}
    \item[($\dagger*$)] for every $\gamma \in \Gamma$ and every $\arena{B} \supseteq \sarena$, $\supp{\arena{B}}'\gamma^\flat$ iff $\supp{\arena{B}}' \gamma$
    \item[($\ddagger*$)] for any $P$ and $q$, if $P \wstrat{\sarena} q$, then $\shrp{P} \vdash_{\rm NJ} \shrp{q}$ 
\end{itemize}
The same argument as in \cite{Sandqvist_Tor} can then be used to conclude that $\Xi^\flat \wstrat{\sbase} \zeta^\flat$ and, consequently, by $(\ddagger *)$, $\shrp{(\Xi^\flat)} \vdash_{\rm NJ} \shrp{(\zeta^\flat)}$, which is just to say $\Xi \vdash_{\rm NJ}  \zeta$.
\end{proof}

\end{document}